\renewcommand{\epsilon}{\varepsilon}
\renewcommand{\theta}{\vartheta}
\renewcommand{\rho}{\varrho}
\renewcommand{\phi}{\varphi}
\newcommand{\nub}{\boldsymbol{\nu}}
\newcommand{\parab}{\boldsymbol{\rho}}
\newcommand{\para}{\rho}
\newcommand{\gammab}{\boldsymbol{\gamma}}
\newcommand{\epsb}{\boldsymbol{\varepsilon}}
\newcommand{\sigmab}{\boldsymbol{\sigma}}
\newcommand{\taub}{\boldsymbol{\tau}}
\newcommand{\ub}{\mathbf{u}}
\newcommand{\vb}{\mathbf{v}}
\newcommand{\ab}{\mathbf{a}}
\newcommand{\bb}{\mathbf{b}}
\newcommand{\xb}{\mathbf{x}}
\newcommand{\pb}{\mathbf{p}}
\newcommand{\Pp}{{\mathcal P}}
\newcommand{\Amat}{\mathbb{A}}
\newcommand{\Bmat}{\mathbb{B}}
\newcommand{\veOne}{\texttt{ve1}}
\newcommand{\veTwo}{\texttt{ve2}}
\def\P0{{\Pi^{0, E}_k}}
\theoremstyle{definition}
\theoremstyle{remark}
\newtheorem{remark}{Remark}[section]
\theoremstyle{remark}
\theoremstyle{plain}
\newtheorem{proposition}{Proposition}[section]
\newtheorem{lemma}{Lemma}[section]
\author[1]{E. Artioli \thanks{artioli@ing.uniroma2.it}}
\author[2]{L. Beir\~ao da Veiga \thanks{lourenco.beirao@unimib.it ({\bf corresponding author})}}
\author[2]{F. Dassi \thanks{franco.dassi@unimib.it}}
\affil[1]{Department of Civil Engeneering and Computer science,

University of Rome Tor Vergata, Via del Politecnico 1 - 00133 Roma, Italy} 
\affil[2]{Department of Mathematics and Applications, 

University Milano - Bicocca, Via R. Cozzi 55 - 20125 Milano, Italy}  
\title{
\textbf{Curvilinear Virtual Elements for 2D solid mechanics applications}}
\begin{document}

\maketitle


\begin{abstract}
In the present work we generalize the curvilinear Virtual Element technology, introduced for a simple linear scalar problem in a previous work, to generic 2D solid mechanic problems  in small deformations. Such generalization also includes the development of a novel Virtual Element space for displacements that contains rigid body motions.
Our approach can accept a generic black-box (elastic or inelastic) constitutive algorithm and, in addition, can make use of curved edges thus leading to an exact approximation of the geometry. 
Rigorous theoretical interpolation properties for the new space on curvilinear elements are derived.
We undergo an extensive numerical test campaign, both on elastic and inelastic problems, to assess the behavior of the scheme. The results are very promising and underline the advantages of the curved VEM approach over the standard one in the presence of curved geometries.
\end{abstract}

\section{Introduction}

The Virtual Element Method (VEM) is a recent technology introduced in \cite{volley,autostoppisti} (see also \cite{apollo}) for the discretization of problems in partial differential equations, generalizing the Finite Element Method to meshes using general polygonal/polyhedral elements. In the framework of Structural Mechanics, after the initial papers 
\cite{BBM-elast,gain2014virtual,BLM,wriggers2016virtual}, many applications and developments were pursued by the community. Among the many publications, we here refer to a long but non exhaustive list 
\cite{Brezzi:Marini:plates,gain2015topology,Paulino2015bridging,PartI,PartII,Chi2017,Ant2017,wriggers2017low,Wriggers-2,artioli2018asymptotic,ArtMarSacc2018,ARTIOLI2018978,Ben2018,TA2018,PinTrov2019,DassiArxivHR2109,CHI201921}. Finally, it must be noted that other polytopal methods exist in the Structural Mechanics literature; we here limit ourselves in citing 
\cite{sukumar2004conforming,biabanaki2014polygonal,chi2015polygonal,di2015hybrid} 
and references therein. 

Since standard Virtual and Finite Elements make use of meshes with straight edges, the domain of interest is approximated by a linear interpolation of the boundary, a procedure which introduces a geometry approximation error that can dominate the analysis for second-to-higher order schemes. In the literature there has been introduced many approaches and variants in order to have a better (or even exact) approximation of the geometry. Among the many possibilities, we here cite two among the most known, that are isoparametric Finite Elements (see for instance \cite{libro_sui_FEM_di_Ciarlet}) and Isogeometric Analysis (see for instance 
\cite{hughes2005isogeometric,cottrell2009isogeometric}).
As shown in \cite{BRV_curvi}, it turns out that for Virtual Elements in 2D the extension to cases with curved edges (and thus able to approximate exactly the geometry of interest) is quite natural. By changing the definition of the edge-wise space, the method is able to accommodate for general boundaries described by a given parametrization (for instance obtained by CAD). The approximation properties of the space and the convergence of the method turns out to be optimal, and there is no need to build a volumetric parametrization of the object as in standard IGA or to carefully position the ``mapping points'' as in isoparametric FEM. Currently, a limitation of this VEM approach is that extension to 3D is not immediate and needs more investigation.
{ Furthermore, we cite also \cite{AnanArxiv,BertoArxiv,Botti2018} that represent other examples of approaches that allow for curved polygonal elements, still on a simple linear elliptic model problem as in \cite{BRV_curvi}.}

In the present work we start from the basic construction introduced in \cite{BRV_curvi}, that was on a simple (scalar) linear model problem, and generalize it to the more complex situation of small deformation problems in structural mechanics. Our approach, in the spirit of \cite{BLM,PartI,PartII}, can accept a generic black-box (elastic or inelastic) constitutive algorithm but, in addition, can make use of curved edges thus leading to an exact approximation of the geometry. When the Virtual Spaces of \cite{BRV_curvi} are applied in the present context of structural mechanics, a potential drawback appears. Indeed, such spaces (although holding optimal approximation properties) contain constant functions but do not contain linear ones. As a consequence, the ensuing vector displacement space does not contain rigid body rotations. In order to deal with this issue, in the present paper we propose also an alternative novel VEM space, that has the same degrees of freedom as the original one but the added property of containing all rigid body motions. Such space is based on a non-conventional construction that is partially in physical space and partially mapped from a parameter space. We furthermore prove the optimal approximation properties of this new space, the proof being interesting due to the particular nature of the proposed space. 

After presenting the Virtual space, the associated method and the theoretical results, we undergo an extensive numerical test campaign, both on elastic and inelastic problems, to assess the behavior of the scheme and make some comparison. The results are very promising and underline the advantages of the curved VEM approach over the standard one for elements of second (or higher) order.
It is notable that, from the pure coding standpoint, the extension from standard to curved elements is essentially only a change in the quadrature rules. This makes the proposed approach a viable choice, since modifying a standard VEM scheme into a curved approach implies a modification (and not a revolution) in the existing codes. 

The paper is organized as follows. After introducing some preliminaries in Sec.~\ref{sec:intro}, we describe the two VEM spaces (original and novel) in Sec.~\ref{sec:1}. Afterwards, we derive the theoretical interpolation properties of the new space in Sec.~\ref{sec:2}, and describe the proposed method in Sec.~\ref{sec:3}. Finally, Sec.~\ref{sec:num} is devoted to the numerical tests, divided into elastic and inelastic cases.

\section{Notation and preliminaries}\label{sec:intro}

We assume a bounded 2D domain $\Omega$ describing the body of interest, such that its boundary $\partial\Omega$ is a finite union of regular components $\Gamma_i$, $i=1,2,..,m$. Each of such regular components is parametrized by an invertible mapping $\gamma_i$ from an interval in the real line into $\Gamma_i$ (this is a typical situation, for instance, for domains described by CAD). We assume that we have a conforming mesh of polygons $\Omega_h$, that may have curved edges on the boundary in order to describe exactly the domain $\Omega$ (see Fig.~\ref{Fig-Geo1}). It is not restrictive to assume that each curved face is a subset of only one $\Gamma_i$ and therefore regular. In order to symplify the notation in the following we will simply use $\Gamma$ and
$$
\gammab \ : \ [0,L] \longrightarrow \Gamma .
$$
to indicate a generic curved part of the boundary, and its associated parametrization.

\begin{figure}
\begin{center}
\includegraphics[scale=0.60]{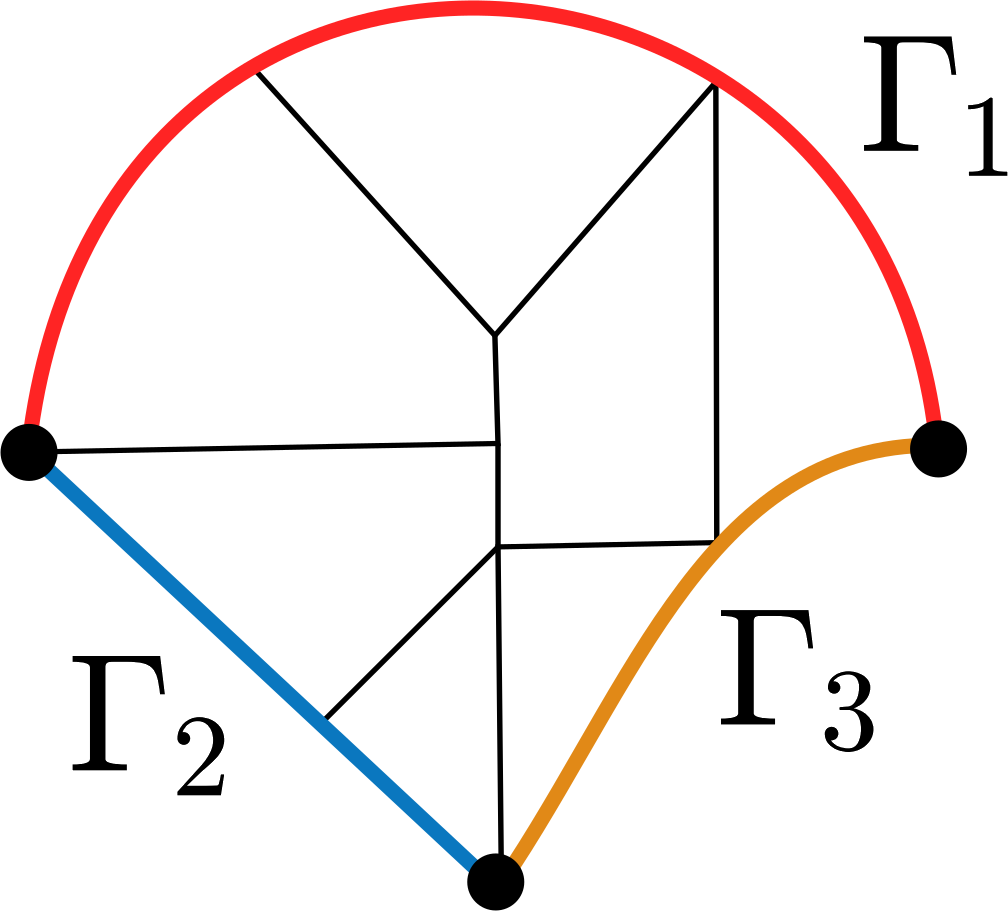}
\end{center}
\caption{Sample meshed domain $\Omega$.}
\label{Fig-Geo1}
\end{figure}

In the following we will denote with $E$ a generic element of the mesh and with $e$ a generic edge. As usual the symbol $h$ will be associated to the diameter of objects, for instance $h_E$ will denote the diameter of the element $E$ and $h_e$ the (curvilinear) length of the edge $e$. An $h$ without indexes denotes as usual the maximum mesh element size.

We will adopt the standard notation for $H^s$ Sobolev norms and semi-norms (with $s \in \mathbb R$ non-negative) on open measurable sets. In the case of a curved edge, the norm is intended with respect to the arc-length parametrization of the edge. We will denote by $\lesssim, \gtrsim$ an inequality of two real numbers that holds up to a costant that is independent of the particular mesh $\Omega_h$ or, in case of a local estimate, of the element $E$.

In this paper we focus on the equilibrium problem for a two-dimensional medium $\Omega$ with either elastic or inelastic constitutive material behavior, in small deformation regime. More specifically, in the elastic case the variational formulation of the problem reads
\begin{equation}\label{eq:con-pbl}
\left\{
\begin{aligned}
& \textrm{Find } \ub \in {\cal V} \textrm{ such that} \\
& \int_\Omega \sigmab({\bf x},\epsb(\ub)): \epsb(\vb) {\rm d}x = F(\vb) \qquad \forall \vb \in \delta {\cal V} ,
\end{aligned}
\right.
\end{equation}
where ${\cal V}$ (and $\delta {\cal V}$) represents the space of kinematically admissible displacements (and its variations), 
$\epsb$ denotes the usual linear strain operator, $\sigmab$ represents the elastic constitutive law relating strains to stresses, $F$ indicates a linear operator representing volume and surface forces acting on $\Omega$.

In the case of an inelastic material behaviour, the constitutive law $\sigma$ will depend on some history variables and the problem will be written in a pseudo-time evolution formulation, see for instance \cite{simo_computational_1998}.
\begin{remark}
We note that the method here proposed trivially extends also to the case where there are (fixed) interfaces also inside $\Omega$, for instance in the presence of material coefficient jumps. In such case also internal elements may have curved edges in order to adapt to the material interface. For the sake of simplicity, in the exposition we will refer only to the boundary of $\Omega$ as a source of curved edges. 
\end{remark}

\section{Description of the Virtual Space}
\label{sec:1}

In the present section we briefly review the space proposed in \cite{BRV_curvi} and introduce also an alternative space that has the additional advantage of including rigid body motions. Indeed, the space proposed for the diffusion problem in \cite{BRV_curvi} has good approximation properties and is therefore suitable also as a displacement space for problems in structural mechanics. On the other hand, such space does contain constant polynomials (that is translations of the body) but not (linearized) rigid body rotations. Therefore we here propose an alternative space that includes also rigid body rotations without increasing the number of degrees of freedom.

\subsection{The original space}
\label{sec:1-1}

We here quickly review the space of \cite{BRV_curvi}, here extended to the vector-valued version.
As usual, we define the space element by element. Let therefore $E \in \Omega_h$. Note that $E$ may have some curved edge, laying on the curved boundary $\Gamma$. For each curved edge $e$, let $\gammab_e : [a,b] \rightarrow e$ denote the restriction of the parametrization describing $\Gamma$ to the edge $e$. 
Then, we indicate the space of mapped polynomials (living on $e$) as 
$$
\widetilde{\Pp}_k(e) = \Big\{ \pb \circ \gammab_e^{-1} \: : \: \pb \in (\Pp_k[a,b])^2 \Big\} . 
$$
The local virtual element space on $E$ is then defined as
\begin{equation}\label{original}
\begin{aligned}
V_h(E) = \Big\{ \vb \in [H^1(E) \cap C^0(E)]^2 \: & : \: \vb|_e \in [\Pp_k(e)]^2 \textrm{ if } e \textrm{ is straight} , \\ 
& \vb|_e \in [\widetilde{\Pp}_k(e)]^2 \textrm{ if } e \textrm{ is curved} , 
- \Delta \vb \in [\Pp_{k-2}(E)]^2 \Big\} .
\end{aligned}
\end{equation} 
The associated degrees of freedom are (see \cite{BRV_curvi} for the simple proof)
\begin{itemize}
\item\textbf{D1}: pointwise evaluation at each vertex of $E$ ,
\item\textbf{D2}: pointwise evaluation at $k-1$ distinct points for each edge of $E$
\item\textbf{D3}: moments $\int_E \vb \cdot \pb_{k-2}$ for all $\pb_{k-2} \in [\Pp_{k-2}(E)]^2$ .
\end{itemize}
As usual, the global space is obtained by a standard gluing procedure
$$
V_h = \Big\{ \vb \in H^1(\Omega) \: : \: \vb|_E \in V_h(E) \ \forall E \in \Omega_h \Big\}
$$
with the obvious extension of the local degrees of freedom to global ones.
Note that, clearly, Remark \ref{rem:enh} below applies also to the space above.

\subsection{An alternative space}
\label{sec:1-2}

Let $e$ be a (possibly curved) edge of the polygon $E$, with endpoints $\overline{\nub},\nub'$. In the following we denote by $h_e$ the (curvilinear) length of the edge $e$. Note that, as a consequence of the fact that the boundary $\Gamma$ is fixed once and for all (see \cite{BRV_curvi}) it holds $h_e \simeq \| \overline{\nub} - \nub'\|$.

We will now present a preliminary linear space of (two component) vector fields living on $e$, with associated degrees of freedom given by the vector evaluation at the two endpoints of $e$. Let therefore $\overline{\ub}$ and $\ub'$ in $\mathbb{R}^2$ represent vector values associated to $\overline{\nub}$ and $\nub'$, respectively. We build an associated vector field on $e$ as follows. Let $F : \mathbb{R}^2 \rightarrow \mathbb{R}^2$ be the unique mapping that satisfies
\begin{equation}\label{F-def}
\left\{
\begin{aligned}
& F (\xb) = \ab + \Amat \, \bb \quad \ab, \bb \in \mathbb{R}^2 , \\
& F(\overline{\nub}) = \overline{\ub} \: , \ F(\nub') = \ub' \ , 
\end{aligned}
\right. 
\end{equation}
where the matrix field $\Amat=\Amat(\xb)$ depends on the position $\xb=(x_1,x_2)$
$$
\Amat = 
\begin{pmatrix}
x_1 - \overline{\nu}_1 & - (x_2 - \overline{\nu}_2) \\
x_2 - \overline{\nu}_2 & x_1 - \overline{\nu}_1 
\end{pmatrix} .
$$ 
A mapping $F$ as above is a composition of a translation, an homotopy and a (linearized) rotation; therefore mappings of this type include, in particular, linearized rigid body motions (that is the kernel of the symmetric gradient operator).  Since the map $F$ depends on ${\overline{\ub},\ub'}$ we will denote it also by $F^{\overline{\ub},\ub'} $ when we need to be explicit on such dependence. We will give an explicit expression of $F$ in Lemma 
\ref{lem:F-expl} below.

Then, the space $R_h(e)$ of two-component vector fields living on $e$ is defined as the restriction to $e$ of all possible maps $F$:
$$
R_h(e) = \Big\{ F^{\overline{\ub},\ub'}|_{e} \: : \: \overline{\ub},\ub' \in \mathbb{R}^2 \Big\} . 
$$
The following lemma follows by direct computations (the simple proof is not shown).
\begin{lemma}\label{lem:F-expl}
Given $\overline{\ub},\ub' \in \mathbb{R}^2$, the coefficients ${\bf a}, {\bf b}$ of the map $F = F^{\overline{\ub},\ub'}$ in \eqref{F-def} are
$$
\begin{aligned}
& {\bf a} = \overline{\ub} \ , \quad {\bf b} =  {\Bmat}^{-1} (\ub' - \overline{\ub}) \ , \\
& \Bmat = 
\begin{pmatrix}
\nu'_1 - \overline{\nu}_1 & - (\nu'_2 - \overline{\nu}_2) \\
\nu'_2 - \overline{\nu}_2 & \nu'_1 - \overline{\nu}_1 
\end{pmatrix} \ , \quad
\Bmat^{-1} = \frac{1}{\| \nub' - \overline{\nub} \|^2}
\begin{pmatrix}
\nu'_1 - \overline{\nu}_1 & \nu'_2 - \overline{\nu}_2 \\
-(\nu'_2 - \overline{\nu}_2) & \nu'_1 - \overline{\nu}_1 
\end{pmatrix} .
\end{aligned}
$$
\end{lemma}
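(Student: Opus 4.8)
The plan is to verify the two interpolation conditions in \eqref{F-def} by direct substitution, exploiting the special algebraic structure of the matrix field $\Amat(\xb)$. First I would evaluate $\Amat$ at the base point $\overline{\nub}$: by construction every entry of $\Amat$ is an affine function of the differences $x_1-\overline{\nu}_1$ and $x_2-\overline{\nu}_2$, all of which vanish when $\xb=\overline{\nub}$, so $\Amat(\overline{\nub})$ is the zero matrix. Consequently $F(\overline{\nub}) = \ab$, and imposing $F(\overline{\nub}) = \overline{\ub}$ forces $\ab = \overline{\ub}$.

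Next I would evaluate $\Amat$ at the second endpoint $\nub'$. Substituting $x_1 = \nu'_1$ and $x_2 = \nu'_2$ in the definition of $\Amat$ yields precisely the matrix $\Bmat$ displayed in the statement. Hence the second interpolation condition becomes $F(\nub') = \ab + \Bmat\,\bb = \overline{\ub} + \Bmat\,\bb = \ub'$, i.e.\ $\Bmat\,\bb = \ub' - \overline{\ub}$.

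It then remains to invert $\Bmat$. Its determinant equals $(\nu'_1 - \overline{\nu}_1)^2 + (\nu'_2 - \overline{\nu}_2)^2 = \| \nub' - \overline{\nub} \|^2$, which is strictly positive because the two endpoints of an edge are distinct (recall $h_e \simeq \| \overline{\nub} - \nub' \|$), so $\Bmat$ is nonsingular and the solution $\bb = \Bmat^{-1}(\ub' - \overline{\ub})$ is unique. Using the elementary formula for the inverse of a $2\times 2$ matrix of the form $\begin{pmatrix} p & -q \\ q & p \end{pmatrix}$, namely $\tfrac{1}{p^2+q^2}\begin{pmatrix} p & q \\ -q & p \end{pmatrix}$, one recovers the closed form of $\Bmat^{-1}$ as stated. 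This simultaneously establishes existence and uniqueness of the map $F=F^{\overline{\ub},\ub'}$, so that $R_h(e)$ is well defined. There is no genuine obstacle in this argument: the only point that needs a (one-line) justification is the nonsingularity of $\Bmat$, which is immediate from the positivity of $\| \nub' - \overline{\nub} \|$; everything else is a direct evaluation of $\Amat$ at the two endpoints.
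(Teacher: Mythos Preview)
Your argument is correct and is precisely the direct computation the paper alludes to; in fact the paper does not display a proof at all, stating only that the lemma ``follows by direct computations (the simple proof is not shown).'' Your evaluation of $\Amat$ at the two endpoints, together with the $2\times 2$ inversion, is exactly what is intended.
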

We recall $\gammab_e : [a,b] \rightarrow e$ denotes the restriction of the parametrization describing 
$\Gamma$ to the edge $e$. We introduce another space of vector fields living on $e$, defined as the push forward of the $\Pp_k$ polynomials living on $e$ that vanish on $a,b$:
$$
B_h(e) = [\widetilde{\Pp}_k(e) \cap H^1_0(e)]^2 =
\Big\{ \pb \circ \gammab_e^{-1} \: : \: \pb \in (\Pp_k[a,b])^2, \: \pb(a)=\pb(b)={\bf 0} \Big\} . 
$$
Finally, the virtual edge space is
\begin{equation}\label{qqq}
V_h(e) = R_h(e) \oplus B_h(e) .
\end{equation}
{
\begin{remark}\label{rem:dofedge}
A set of degrees of freedom for the space $V_h|e$ is simply given by $k+1$ distinct pointwise evaluations along the edge, including the two extrema. This is easy to check, exploiting the fact that the evaluation operators at the two endpoints constitutes a set of degrees of freedom for $R_h(e)$ that vanish when computed on $B_h(e)$.
\end{remark}
As a byproduct of the above remark, the two spaces in \eqref{qqq} constitute a direct sum and the dimension of the space $V_h(e)$ is equal to $k+1$. 
}
We are now ready to define the virtual space $V_h(E)$ 
on the (possibly curved) polygon $E$.
\begin{equation}\label{new}
\begin{aligned}
V_h(E) = \Big\{ \vb \in [H^1(E) \cap C^0(E)]^2 \: & : \: \vb|_e \in [\Pp_k(e)]^2 \textrm{ if } e \textrm{ is straight} , \\ 
& \vb|_e \in V_h(e) \textrm{ if } e \textrm{ is curved} , - \Delta \vb \in [\Pp_{k-2}(E)]^2 \Big\} .
\end{aligned}
\end{equation}
Note that, due to Remark \ref{rem1} below, the definition above is robust to the limit when a curved edge becomes straight. The standard virtual element theory \cite{volley}, combined with Remark \ref{rem:dofedge}, yields that 
\textbf{D1},~\textbf{D2} and \textbf{D3} of Sec.~\ref{sec:1-1} is also a set of degrees of freedom for the novel space $V_h(E)$.
As usual, the global space is obtained by a standard gluing procedure
$$
V_h = \Big\{ \vb \in H^1(\Omega) \: : \: \vb|_E \in V_h(E) \ \forall E \in \Omega_h \Big\}
$$
with the obvious extension of the local degrees of freedom to global ones.

\begin{remark}\label{rem1}
It can be checked that, whenever the edge e is straight and the associated parametrization $\gamma_e$ is affine, one has $V_h(e) = \Pp_k(e)$. Therefore in such case we correctly recover the standard virtual space on straight edges.
\end{remark}

\begin{remark}\label{rem:enh}
Also an enhanced version (see for instance \cite{projectors}) of this space could be introduced. Indeed, since the enhancement of the space does not involve the boundary of the elements, it is trivial to build the enhanced version of the spaces $V_h(E)$ above combining the present results with those in \cite{projectors,genCoeff,autostoppisti}.
\end{remark}

\section{Approximation properties of the space}
\label{sec:2}

Optimal approximation properties for the original space in the $H^1$ norm were proved in Theorem 3.1 of \cite{BRV_curvi}. These immediately extend (component-wise) to the vector valued version \eqref{original} here considered. On the other hand, the space \eqref{new} differs from \eqref{original}.
The only difference between the space \eqref{original} and \eqref{new} is the definition of the boundary space on curved edges (that is $V_h(e)$ instead of 
$[\widetilde{\Pp}_k(e)]^2$). Therefore we can directly apply the theory of \cite{BRV_curvi} also in this new case, provided we can prove approximation estimates for the new curved edge space $V_h(e)$. That is, we need to prove a new counterpart of Lemma 3.2 in \cite{BRV_curvi} for the novel space $V_h(e)$, the challenge being that $V_h(e)$ is \emph{not} a mapped polynomial space but stems instead from a ``hybrid'' definition \eqref{qqq}. This is the topic of the present section.

Let now a generic vector field $\ub \in [H^1(e)]^2$.  
Let the unique interpolant
\begin{equation}\label{eq:def1}
\ub_I^R \in R_h(e) \ , \quad 
\ub_I^R(\overline{\nub}) = \ub(\overline{\nub}) , \
\ub_I^R(\nub') = \ub(\nub') .
\end{equation}
\begin{lemma}\label{lem:bound}
It exists a positive constant C, depending only on $k$ and the parametrization $\gammab$, such that
$$
| \ub_I^R |_{H^m(e)} \le C  | \ub |_{H^1(e)} \qquad \forall \ m \in \{ 1,2,..,k+1 \}.
$$
\end{lemma}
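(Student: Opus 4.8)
The map $\ub_I^R$ is the unique element of $R_h(e)$ agreeing with $\ub$ at the two endpoints $\overline\nub$ and $\nub'$. By Lemma~\ref{lem:F-expl}, if we set $\overline\ub = \ub(\overline\nub)$ and $\ub' = \ub(\nub')$, then $\ub_I^R = (F^{\overline\ub,\ub'})|_e$ with $\ab = \overline\ub$ and $\bb = \Bmat^{-1}(\ub' - \overline\ub)$. Hence on $e$ we have the explicit formula $\ub_I^R(\xb) = \overline\ub + \Amat(\xb)\,\Bmat^{-1}(\ub' - \overline\ub)$, where $\Amat(\xb)$ has entries that are affine in $\xb$. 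The first step is therefore to estimate each of these three factors. The constant vector $\overline\ub$ contributes nothing to any $H^m$ seminorm with $m \ge 1$. The matrix $\Amat(\xb)$, restricted to $e$, has entries that are affine functions of the curve parametrization $\gammab_e$; because the boundary $\Gamma$ (and hence $\gammab$) is fixed, all its derivatives along $e$ are controlled by $h_e$ and by derivatives of $\gammab$, giving $\|\Amat\|_{H^m(e)} \lesssim h_e$ for $m \le k+1$ with a hidden constant depending only on $k$ and $\gammab$ (here one uses that $\Amat$ has at most linear growth, so $|x_i - \overline\nu_i| \lesssim h_e$ on $e$, while higher $\gammab_e$-derivatives absorb the remaining powers). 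Similarly $\|\Bmat^{-1}\| \lesssim h_e^{-1}$ by the explicit inverse in Lemma~\ref{lem:F-expl} combined with $h_e \simeq \|\overline\nub - \nub'\|$. Multiplying these bounds via a product (Leibniz) rule on $e$ gives $|\ub_I^R|_{H^m(e)} \lesssim h_e^{-1}\,|\ub' - \overline\ub|$ for each $m \in \{1,\dots,k+1\}$, the powers of $h_e$ cancelling.

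The second step is to bound the nodal difference $|\ub' - \overline\ub| = |\ub(\nub') - \ub(\overline\nub)|$ by $|\ub|_{H^1(e)}$. This is a one-dimensional trace/Sobolev estimate along the curve: writing $\ub$ in arc-length, $\ub(\nub') - \ub(\overline\nub) = \int \ub'$, so $|\ub(\nub') - \ub(\overline\nub)| \le h_e^{1/2}\,|\ub|_{H^1(e)}$ by Cauchy--Schwarz. Combining with the previous step yields $|\ub_I^R|_{H^m(e)} \lesssim h_e^{-1/2}\,|\ub|_{H^1(e)}$. A small point to address is that the statement of the lemma claims a bound by $|\ub|_{H^1(e)}$ without an $h_e^{-1/2}$ factor; this is consistent only if one reads the seminorms with the scaling conventions of \cite{BRV_curvi} (or if $h_e$ is bounded below, which it need not be in general) — in the write-up I would either keep the explicit $h_e$ powers and absorb them into the surrounding estimate where Lemma~\ref{lem:bound} is used, or state the inequality with the scaled seminorms so the powers of $h_e$ balance. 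I expect the cleanest route is to carry the $h_e$ bookkeeping explicitly and note that the $H^1$-seminorm on the left enters the downstream argument always paired with the right power of $h_e$.

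\textbf{Main obstacle.} The genuinely delicate part is the bound $\|\Amat\|_{H^m(e)} \lesssim h_e$ for $m$ up to $k+1$: although $\Amat$ is affine in $\xb$, its restriction to a curved edge is $\Amat \circ \gammab_e$, so its $m$-th arc-length derivative involves up to $m$-th derivatives of the parametrization $\gammab_e$, and one must check that the scaling in $h_e$ still comes out as a single power of $h_e$ (no loss) uniformly in $k$. This is exactly the kind of estimate proved for mapped polynomials in Lemma~3.2 of \cite{BRV_curvi}, and the argument here is a simplified instance of it since $\Amat$ is only linear; the key quantitative input is that the parametrization $\gammab$ of the fixed boundary $\Gamma$ is smooth with all derivatives bounded, so the constant depends only on $k$ and $\gammab$ as claimed. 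Once this is in hand, everything else is the product rule and the trivial one-dimensional Sobolev inequality above.
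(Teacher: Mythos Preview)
Your overall strategy --- use the explicit coefficients from Lemma~\ref{lem:F-expl}, bound $\|\Bmat^{-1}\|$, and control the nodal difference by Cauchy--Schwarz --- is exactly the paper's approach. The gap is in the $h_e$ bookkeeping, and it leads you to the wrong conclusion that the lemma as stated is off by a factor $h_e^{-1/2}$. It is not: the inequality $|\ub_I^R|_{H^m(e)}\lesssim |\ub|_{H^1(e)}$ holds with no loss of powers of $h_e$.

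The specific slip is the claim $\|\Amat\|_{H^m(e)}\lesssim h_e$ for $m\ge 1$. Write everything in the arc-length parametrization $\parab:[0,h_e]\to e$. The entries of $\Amat(t)$ are $\para_i(t)-\overline\nu_i$, so for $m\ge 1$ the entries of $\partial_t^m\Amat(t)$ are $\partial_t^m\para_i(t)$, i.e.\ $m$-th arc-length derivatives of the \emph{fixed} curve $\Gamma$. These are bounded \emph{pointwise} by a constant depending only on $\gammab$ (for $m=1$ one even has $|\partial_t\parab|=1$). Hence $|\Amat|_{H^m(e)}=\|\partial_t^m\Amat\|_{L^2(0,h_e)}\simeq h_e^{1/2}$, not $h_e$; your bound is in fact too strong and fails as $h_e\to 0$. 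With the correct scaling, since $\bb=\Bmat^{-1}(\ub'-\overline\ub)$ is constant,
\[
|\ub_I^R|_{H^m(e)}=|\Amat\,\bb|_{H^m(e)}\le |\Amat|_{H^m(e)}\,\|\bb\|\lesssim h_e^{1/2}\cdot h_e^{-1/2}\,|\ub|_{H^1(e)}=|\ub|_{H^1(e)},
\]
which is precisely the stated lemma. (Equivalently, as the paper does, bound $\|\partial_t^m\ub_I^R(t)\|$ pointwise by $\|\bb\|\lesssim h_e^{-1/2}|\ub|_{H^1(e)}$ and then pick up the missing $h_e^{1/2}$ when passing to the $L^2$ norm over an interval of length $h_e$.) There is also an arithmetic inconsistency in your write-up: from $\|\Amat\|_{H^m(e)}\lesssim h_e$ and $\|\Bmat^{-1}\|\lesssim h_e^{-1}$ one would get $|\ub_I^R|_{H^m(e)}\lesssim |\ub'-\overline\ub|$, not $h_e^{-1}|\ub'-\overline\ub|$ as you wrote. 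Once you fix the $\Amat$ estimate to $h_e^{1/2}$, the whole argument closes cleanly and no appeal to scaled seminorms or downstream absorption is needed.
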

\begin{proof}
By definition $\ub_I^R = F|e$, where $F$ is given in \eqref{F-def} and coefficients determined by Lemma \ref{lem:F-expl} with $\overline{\ub} = \ub(\overline{\nub})$ and $\ub'=\ub(\nub')$. Let $t$ represent the arc-length parametrization along the edge $e$, and 
$\parab : [0,h_e] \rightarrow e$ the corresponding parametrization. 
Since all norms on $\mathbb{R}^{2 \times 2}$ are equivalent, it holds
$$
\| \Bmat^{-1} \| \lesssim \left(\Bmat^{-T}:\Bmat^{-1}\right)^{1/2} = \| \nub' - \overline{\nub} \|^{-1} \lesssim h_e^{-1} .
$$
Then, by an Holder inequality, the euclidean norm of ${\bf b}$ satisfies
\begin{equation}\label{pippo}
\begin{aligned}
\| {\bf b} \| & \le \| \Bmat^{-1} \| \: \| \ub' - \overline{\ub} \| = 
\| \Bmat^{-1} \| \: \| \ub(\nub') - \ub(\overline{\nub}) \| \\
& \lesssim  h_e^{-1} \| \!\! \int_{\overline{\nub}}^{\nub'} \partial_t \ub(t) {\rm d}t \|
\lesssim  h_e^{-1/2} | \ub |_{H^1(e)} .
\end{aligned}
\end{equation}
The expression of $\ub_I^R$ in term of the arc-length parametrization is, by definition,
$$
\ub_I^R(t) = \ab + \Amat \, \bb \ \textrm{with} \quad
\Amat = 
\begin{pmatrix}
\para_1(t) - \overline{\nu}_1 & - (\para_2(t) - \overline{\nu}_2) \\
\para_2(t) - \overline{\nu}_2 & \para_1(t) - \overline{\nu}_1 
\end{pmatrix} .
$$
Therefore a direct derivation and \eqref{pippo}
yield, for all $t \in (0,h_e)$ and $m \in \{ 1,2,..,k+1 \}$,
\begin{equation}\label{eq1}
\| \partial^m_t \ub_I^R(t) \| = \| 
\begin{pmatrix}
\partial^m_t \para_1(t) & - \partial^m_t \para_2(t)  \\
\partial^m_t \para_2(t) & \partial^m_t \para_1(t) 
\end{pmatrix} {\bf b} \| 
\lesssim \| \partial^m_t \parab(t) \| \| {\bf b} \| 
\lesssim h_e^{-1/2} | \ub |_{H^1(e)} .
\end{equation}
Note that the term above $\| \partial^m_t \parab(t) \| \lesssim 1$,  independently of the mesh, because the curve $\Gamma$ is fixed once and for all. The Holder inequality together with bound \eqref{eq1} immediately grants
$$
| \ub_I^R(t) |_{H^m(e)} = \| \partial^m_t \ub_I^R(t) \|_{L^2(e)}
\lesssim h_e^{1/2} h_e^{-1/2} | \ub |_{H^1(e)} = | \ub |_{H^1(e)} .
$$
\end{proof}

We now need to introduce some notation. Given a curved edge $e$, we denote by $\{ x_i^e \}_{i=1}^{k+1}$ the $k+1$ distinct points on the edge associated to the DoFs of $V_h(e) = V_{h|e}$ (the cases $i=1,i=k+1$ representing the two extrema of $e$). 
Given any sufficiently regular vector field $\vb$ living on $e$, we denote by $\vb_{I\!I}$ its interpolant in $B_h(e)$, that is the unique function in $B_h(e)$ that satisfies $\vb_{I\!I}(x_i^e)=v(x_i^e)$ for $i=2,..,k$. We moreover denote by $\vb_I$ the interpolant of $\vb$ in the space $V_h(e)$, that is the unique function in $V_h(e)$ that satisfies $\vb_{I}(x_i^e)=\vb(x_i^e)$ for $i=1,..,k+1$. 
It is then easy to check that, for any sufficiently regular vector field $\ub$ living on $e$, one has 
\begin{equation}\label{eq2}
\ub_I = \ub_I^R + (\ub - \ub_I^R)_{I\!I} ,
\end{equation}
where we recall $u_I^R$ was defined in \eqref{eq:def1}.

We now prove our counterpart of Lemma 3.2 in \cite{BRV_curvi}. 
\begin{proposition}\label{Lem31counterpart}
Let $\ub \in [H^s(e)]$ with $1 \le s \le k+1$. Then it holds
$$
|\ub - \ub_I|_{H^m(e)} \lesssim h_E^{s-m} \|  \ub \|_{H^s(e)}
$$
for all real numbers $0 \le m \le s$.
\end{proposition}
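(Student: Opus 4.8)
The plan is to use the decomposition \eqref{eq2}, namely $\ub_I = \ub_I^R + (\ub - \ub_I^R)_{I\!I}$, and to estimate the two contributions separately, reducing everything to known polynomial interpolation estimates on the parameter interval $[a,b]$ (or equivalently $[0,h_e]$ in arc-length) together with the chain-rule stability of the parametrization $\gammab_e$, whose derivatives are bounded independently of the mesh since $\Gamma$ is fixed once and for all. First I would write $\ub - \ub_I = (\ub - \ub_I^R) - (\ub - \ub_I^R)_{I\!I}$, so that it suffices to control $|\ub - \ub_I^R|_{H^m(e)}$ and the $B_h(e)$-interpolation error of the auxiliary field $\wb := \ub - \ub_I^R$. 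It is convenient first to reduce to $0 \le m \le 1$ on the left-hand side: for $m>1$ one simply drops the polynomial part suitably, or more directly one observes that both $\ub_I^R$ and $(\ub - \ub_I^R)_{I\!I}$ are (mapped) polynomials of degree $\le k$, so their higher derivatives are controlled by the $m\le 1$ case via an inverse-type estimate in the parameter space; alternatively, one treats the range $s \ge m > 1$ only when $s>1$ so that $\ub$ genuinely has those derivatives. I expect the cleanest route is to prove the two base estimates $|\ub - \ub_I^R|_{H^m(e)} \lesssim h_e^{s-m}\|\ub\|_{H^s(e)}$ and $|\wb_{I\!I}|_{H^m(e)} \lesssim \sum_{j} h_e^{?}\,|\wb|_{H^j(e)}$, then combine.

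For the first term: $\ub_I^R$ reproduces exactly all linearized rigid body motions (this is the whole point of $R_h(e)$), and in particular it reproduces constants; so a Bramble--Hilbert / Deny--Lions argument applies. Concretely, for any linearized rigid motion $\pb$ one has $(\ub - \pb)_I^R = \ub_I^R - \pb$ by linearity and exactness, hence $\ub - \ub_I^R = (\ub - \pb) - (\ub - \pb)_I^R$; applying Lemma \ref{lem:bound} to $\ub - \pb$ gives $|\ub - \ub_I^R|_{H^m(e)} \lesssim |\ub - \pb|_{H^1(e)} + |\ub - \pb|_{H^m(e)}$ for $1 \le m \le k+1$, and then choosing $\pb$ optimally (for $s\ge 1$, using that the $H^1$-seminorm of $\ub-\pb$ is controlled, and for the interpolation at fractional $s$ and $m$ a standard space-interpolation argument between integer orders) yields $h_e^{s-m}\|\ub\|_{H^s(e)}$. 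For $m=0$ one adds back a Poincaré-type inequality on $e$, using that $\ub_I^R$ and $\ub$ agree at the endpoints so their difference has a zero; this costs one power of $h_e$, consistent with the claimed exponent $s-m$. For the second term: $\wb_{I\!I}$ is the $B_h(e)$ interpolant of $\wb=\ub-\ub_I^R$, i.e. the mapped polynomial of degree $\le k$ vanishing at the endpoints and matching $\wb$ at the interior nodes $x_2^e,\dots,x_k^e$. Pulling back through $\gammab_e^{-1}$ this is exactly the Lagrange interpolant in $(\Pp_k[a,b])^2\cap H^1_0$, for which the classical one-dimensional estimate $|\wb - \wb_{I\!I}|_{H^m} \lesssim h^{s'-m}\|\wb\|_{H^{s'}}$ holds; combined with the chain rule (bounded derivatives of $\gammab_e$ and $\gammab_e^{-1}$, plus $h_e\simeq \|\overline\nub-\nub'\|$) and with the bound $\|\wb\|_{H^{s'}(e)} \le \|\ub\|_{H^{s'}(e)} + \|\ub_I^R\|_{H^{s'}(e)} \lesssim \|\ub\|_{H^s(e)}$ (the latter by Lemma \ref{lem:bound} again, noting $s'\le s$ and $s\le k+1$), this produces the required power of $h_e$. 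Finally, since $h_e \lesssim h_E$ and all constants are mesh-independent, one replaces $h_e$ by $h_E$ to get the stated form.

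The main obstacle, and the point deserving the most care, is the step involving the auxiliary field $\wb = \ub - \ub_I^R$: one must be sure that $\|\wb\|_{H^{s'}(e)}$ is bounded by $\|\ub\|_{H^s(e)}$ with the \emph{right} (namely, nonnegative) power of $h_e$ in front, because $\ub_I^R$ is only stable in the $H^1$-seminorm via Lemma \ref{lem:bound}, not in full higher-order norms with favorable scaling. The resolution is that Lemma \ref{lem:bound} gives $|\ub_I^R|_{H^{m}(e)}\lesssim |\ub|_{H^1(e)}$ for all $m\le k+1$ with \emph{no} negative power of $h_e$, and $\ub_I^R$ has no zeroth-order blow-up because it interpolates $\ub$ at the endpoints, so a Poincaré inequality controls $\|\ub_I^R\|_{L^2(e)}$ by $h_e|\ub_I^R|_{H^1(e)}+\|\ub\|_{L^2(e)}$; hence $\|\wb\|_{H^{s'}(e)}\lesssim \|\ub\|_{H^{s'}(e)}\le\|\ub\|_{H^s(e)}$. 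The only other subtlety is the treatment of non-integer $m$ and $s$: here one first establishes the estimate at integer orders as above and then interpolates, using that all the operators involved ($\ub\mapsto\ub_I^R$, $\wb\mapsto\wb_{I\!I}$) are linear and bounded on the relevant integer-order Sobolev spaces, so that standard real-interpolation theory on the fixed edge $e$ (with the usual scaling bookkeeping in $h_e$) closes the argument.
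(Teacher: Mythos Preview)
Your core argument is correct and essentially coincides with the paper's proof: write $\ub - \ub_I = \wb - \wb_{I\!I}$ with $\wb = \ub - \ub_I^R$, estimate this as a mapped-polynomial interpolation error to obtain $h_e^{s-m}\|\wb\|_{H^s(e)}$, and then bound the seminorms of $\wb$ by those of $\ub$ via Lemma~\ref{lem:bound} (with a Poincar\'e-type argument to dispose of the $L^2$ part). The paper does exactly this, with one streamlining you do not make explicit: since $\wb$ vanishes at both endpoints, its $B_h(e)$-interpolant $\wb_{I\!I}$ is \emph{identical} to the full $\widetilde{\Pp}_k(e)$-interpolant $\tilde\pb$ at all $k+1$ nodes, so one can directly invoke Lemma~3.2 of \cite{BRV_curvi} rather than re-derive the mapped-polynomial estimate by pull-back. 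This is the same content as your ``pull back through $\gammab_e^{-1}$'' step, just packaged as a citation.

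Your detour on the ``first term'' estimate $|\ub - \ub_I^R|_{H^m(e)} \lesssim h_e^{s-m}\|\ub\|_{H^s(e)}$ is both unnecessary and incorrect for $s>2$. It is unnecessary because $\ub - \ub_I$ \emph{equals} $\wb - \wb_{I\!I}$, so no triangle-inequality splitting into $|\wb|+|\wb_{I\!I}|$ is needed; your ``second term'' analysis already bounds the whole quantity. It is incorrect because $\ub_I^R$ interpolates $\ub$ at only two points, so $|\ub-\ub_I^R|$ cannot carry the full power $h_e^{s-m}$ for $s>2$: rigid body motions restricted to a (possibly curved) edge form at best a first-order approximation class, and in your Bramble--Hilbert step no choice of rigid motion $\pb$ can make $|\ub-\pb|_{H^1(e)}$ smaller than $O(h_e)\,|\ub|_{H^2(e)}$. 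Drop that paragraph and proceed directly with the $\wb-\wb_{I\!I}$ estimate, as the paper does, and the proof is clean.
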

\begin{proof}
Recalling \eqref{eq2} we have 
\begin{equation}\label{eqH0}
|\ub - \ub_I|_{H^m(e)} = |(\ub - \ub_I^R) - (\ub - \ub_I^R)_{I\!I}|_{H^m(e)} .
\end{equation}
Let now $\tilde\pb \in \widetilde{\Pp}_k(e)$ be the unique interpolant of $(\ub - \ub_I^R)$ at the points $\{ x_i^e \}_{i=1}^{k+1}$. It was proved in Lemma 3.2 of \cite{BRV_curvi} that the interpolation into mapped polynomials 
$\widetilde{\Pp}_k(e)$ enjoys standard approximation properties; therefore
\begin{equation}\label{eqH1}
|(\ub - \ub_I^R) - \tilde\pb|_{H^m(e)} \lesssim h_e^{s-m} \|  \ub - \ub_I^R \|_{H^s(e)} .
\end{equation}
Since the interpolation on mapped polynomials preserves constant functions (that is, the interpolation of a constant is the same constant), it is not restrictive to assume that $\ub - \ub_I^R$ has zero average. Therefore a standard Poincar\'e inequality for zero average functions yields that \eqref{eqH1} can be extended to the slightly sharper bound (where the $L^2$ part of the norm in the right hand side does not appear)
\begin{equation}\label{eqH2}
|(\ub - \ub_I^R) - \tilde\pb|_{H^m(e)} \lesssim h_e^{s-m} \sum_{j=1}^s |  \ub - \ub_I^R |_{H^j(e)} .
\end{equation}

We now note that, since $(\ub - \ub_I^R)$ vanishes at the endpoints of $e$, it holds 
$(\ub - \ub_I^R)_{I\!I} = \tilde\pb$. Therefore, combining this identity with \eqref{eqH0} and \eqref{eqH2}, we obtain
\begin{equation}\label{eq4}
|\ub - \ub_I|_{H^m(e)} \lesssim h_e^{s-m} \sum_{j=1}^s |  \ub - \ub_I^R |_{H^j(e)} .
\end{equation}
From \eqref{eq4}, we conclude by the triangle inequality and using Lemma \ref{lem:bound}
$$
|\ub - \ub_I|_{H^m(e)} \lesssim h_e^{s-m} \Big( \sum_{j=1}^m |  \ub |_{H^j(e)} +  |\ub_I^R |_{H^j(e)} \Big)
\lesssim h_e^{s-m} \sum_{j=1}^m |  \ub |_{H^j(e)} \lesssim h_E^{s-m} \|  \ub \|_{H^s(e)}.
$$
\end{proof}

Combining the results in \cite{BRV_curvi} with Proposition \ref{Lem31counterpart} (that substitutes Lemma 3.2 in \cite{BRV_curvi}) we immediately have the following approximation property for the global space $V_h$ associated to \eqref{new}.

\begin{proposition}\label{prop:interp}
Let $\ub \in H^s(\Omega)$ and $\frac{3}{2} < s \le k+1$. Let all elements $E \in \Omega_h$ be star shaped with respect to a ball of radius uniformly comparable to $h_E$.
Then, there exists $\ub_I \in V_h$ such that
$$
|\ub - \ub_I|_{H^1(\Omega)} \le C h^{s-1} \| \ub \|_{H^s(\Omega)} 
$$
where the constant $C$ depends on the degree $k$, the ``chunkiness'' constant associated to the shape regularity condition above and the parametrization $\gamma$.
\end{proposition}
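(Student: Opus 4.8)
The plan is to reduce the global estimate to a local one on each element $E$ and then run, essentially verbatim, the argument in the proof of Theorem~3.1 of \cite{BRV_curvi}, with Proposition~\ref{Lem31counterpart} taking the place of Lemma~3.2 of \cite{BRV_curvi}. First I would fix the interpolant: for $s>\tfrac32$ the embedding $[H^s(E)]^2\hookrightarrow[C^0(\overline E)]^2$ and the trace theorem (traces of $[H^s(E)]^2$ lie in $[H^{s-1/2}(e)]^2\subset[H^1(e)]^2$ on each edge) make \textbf{D1}--\textbf{D3} well defined on $\ub$; by their unisolvence on $V_h(E)$ there is a unique $\ub_I|_E\in V_h(E)$ with the same \textbf{D1}--\textbf{D3}. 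Since \textbf{D1}--\textbf{D2} are shared by neighbouring elements, these local interpolants agree on common edges and glue to a global $\ub_I\in V_h\subset[H^1(\Omega)]^2$, so $|\ub-\ub_I|_{H^1(\Omega)}^2=\sum_{E}|\ub-\ub_I|_{H^1(E)}^2$. It then suffices to prove the local bound $|\ub-\ub_I|_{H^1(E)}\lesssim h_E^{\,s-1}\|\ub\|_{H^s(E)}$ with constant depending only on $k$, the chunkiness of $E$ and $\gammab$; summing the squares, using $h_E\le h$ and $\sum_E\|\ub\|_{H^s(E)}^2\le\|\ub\|_{H^s(\Omega)}^2$, gives the claim. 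The star-shapedness hypothesis furnishes exactly the uniform shape regularity needed for the scaled Poincar\'e, trace and inverse inequalities, for the Bramble--Hilbert lemma with explicit powers of $h_E$, and for the continuity of the harmonic extension operator used below.

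For the local bound I would follow \cite{BRV_curvi}. The key structural observation is that the space \eqref{new} differs from \eqref{original} \emph{only} in the boundary space on curved edges. In the proof of Theorem~3.1 of \cite{BRV_curvi} the local estimate is obtained by picking a polynomial approximant $\ub_\pi\in[\Pp_k(E)]^2$ with $|\ub-\ub_\pi|_{H^j(E)}\lesssim h_E^{\,s-j}|\ub|_{H^s(E)}$ for $j=0,1$, writing
\[
\ub-\ub_I=(\ub-\ub_\pi)+\bigl(\ub_\pi-(\ub_\pi)_I\bigr)-(\ub-\ub_\pi)_I ,
\]
bounding the first term by Bramble--Hilbert, and controlling the remaining two \textbf{D1}--\textbf{D3}-interpolation errors by splitting each into its boundary trace (handled edge by edge) and its interior part (handled through the constraint $-\Delta(\cdot)_I\in[\Pp_{k-2}(E)]^2$, the \textbf{D3} moment identity, and a harmonic-extension plus polynomial-Laplacian decomposition combined with scaled trace/Poincar\'e estimates). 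The only place where the curved-edge space enters this chain is the edge interpolation estimate; there, Proposition~\ref{Lem31counterpart} provides precisely the required bound for the new hybrid space $V_h(e)$, so it can be substituted directly for Lemma~3.2 of \cite{BRV_curvi}. Note also that, as already in \cite{BRV_curvi}, $\ub_\pi$ is a \emph{physical} polynomial and does not belong to $V_h(E)$ on curved edges, so the middle term $\ub_\pi-(\ub_\pi)_I$ does not vanish; but all its \textbf{D1}--\textbf{D3} are reproduced, so it is identically zero on straight edges, has vanishing moments against $[\Pp_{k-2}(E)]^2$, and on each curved edge equals the $V_h(e)$-interpolation error of the smooth field $\ub_\pi|_e$, to which Proposition~\ref{Lem31counterpart} (with $s=k+1$) applies; using an inverse estimate on $\ub_\pi$ and the uniform bounds $\|\partial_t^j\parab\|_{L^\infty}\lesssim1$ (the curve $\Gamma$ being fixed), this middle contribution is of higher order $O(h_E^{\,k})\lesssim O(h_E^{\,s-1})$.

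The hard part will be the bookkeeping that connects Proposition~\ref{Lem31counterpart}, which is phrased in arc-length seminorms on a single edge $e$, to the element-level $H^1$ estimate: one must verify that the edgewise bounds enter the harmonic-extension/interior-lifting argument with exactly the right powers of $h_E$, and that the polynomial-consistency term $\ub_\pi-(\ub_\pi)_I$ (absent from classical straight-edge VEM theory) is genuinely harmless. Everything else is the standard VEM interpolation argument of \cite{BRV_curvi}, which carries over unchanged because the interior degrees of freedom and the $-\Delta(\cdot)\in[\Pp_{k-2}(E)]^2$ constraint are identical for the two spaces.
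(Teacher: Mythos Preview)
Your proposal is correct and follows precisely the route the paper takes: the paper's own proof is simply the one-line observation that, since the space \eqref{new} differs from \eqref{original} only in the curved-edge boundary space, the argument of Theorem~3.1 in \cite{BRV_curvi} carries over verbatim once Proposition~\ref{Lem31counterpart} replaces Lemma~3.2 of \cite{BRV_curvi}. Your write-up is in fact a faithful (and more detailed) expansion of exactly that reduction.
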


The condition $s > \frac{3}{2}$ above could be generalized to $s > 1$, but we here prefer to follow the same choice as in \cite{BRV_curvi}. See Remark 3.8 in \cite{BRV_curvi}.

\section{Discretization of the problem}
\label{sec:3}

The discretization of the problem is a combination of the scheme proposed in \cite{BLM, PartI, PartII} for the case with standard straight edges and the curved-edge technology introduced in \cite{BRV_curvi} for a model linear diffusion problem. Clearly, we here have the possibility of using the (vector valued version) of the space in \cite{BRV_curvi} (briefly reviewed in Sec.~\ref{sec:1-1} here above) or the novel space proposed in Sec.~\ref{sec:1-2}. The construction below stays the same for both spaces.

We start by introducing the following projection operator that is used to compute, on each mesh element $E$, a polynomial strain function. Let $[\Pp_{k-1}(E)]^{2 \times 2}_{\rm sym}$ denote the set of polynomial symmetric tensors of degree $k-1$ living on $E$. Given $\vb_h \in V_h$ and $E \in \Omega_h$, the operator $\Pi^{\epsb} : V_h \rightarrow [\Pp_{k-1}(E)]^{2 \times 2}_{\rm sym}$ is defined by
$$
\left\{
\begin{aligned}
& \Pi^{\epsb}(v_h) \in [\Pp_{k-1}(E)]^{2 \times 2}_{\rm sym} \\
& \int_E \Pi^{\epsb}(v_h):\pb_{k-1} = \int_E \epsb(v_h):\pb_{k-1} \quad \forall \pb_{k-1} \in [\Pp_{k-1}(E)]^{2 \times 2}_{\rm sym} \: , \\
\end{aligned}
\right.
$$
where $\epsb(v_h)$ denotes as usual the symmetric gradient of $\vb_h$. On each element $E$, the polynomial tensor $\Pi^{\epsb}(v_h)$ represents the local approximation of the strains. Note that the above operator is computable. Indeed an integration by parts shows that
$$
\int_E \epsb(\vb_h):\pb_{k-1} = - \int_E \vb_h \cdot ({\rm div} \, \pb_{k-1})  \: + \: \int_{\partial E} v_h \cdot (\pb_{k-1}{\bf n}_E) \: .
$$
The first term on the right hand side can be computed noting that ${\rm div}\,\pb_{k-1}$ is a vector polynomial of degree $k-2$ and using the internal degrees of freedom values of $\vb_h$. The second term on the right hand side can be computed since we have complete knowledge of $\vb_h$ on the boundary of $E$. Note that all this computations clearly require the integration of known functions on a curved element and a curved boundary; those can be accomplished as shown for instance in 
\cite{BRV_curvi,vianello1,vianello2,vianello3}, see also Sec.~\ref{sec:num}.

We can now describe the proposed numerical method. For simplicity of exposition we will focus on the (nonlinear) elastic case \eqref{eq:con-pbl}, the inelastic case being treated analogously as commented in the notes below.
We start by introducing the local discrete``energy'' form (for any $E \in \Omega_h$ and $\ub_h,\vb_h \in V_h$)
divided into a \emph{consistency} and a \emph{stability} part
\begin{equation}\label{eq:form}
a_h^E(\ub_h,\vb_h) = \int_E \sigmab(\Pi^{\epsb}(\ub_h)): \Pi^{\epsb}(\vb_h) \: + \: s^E(\ub_h;(I-\Pi)\ub_h,(I-\Pi)\ub_h) \: ,
\end{equation}
where the above quantities and operators are described below.
The operator $\sigmab : {\mathbb R}^{2\times 2}_{\rm sym} \rightarrow {\mathbb R}^{2\times 2}_{\rm sym}$ represents the ``black box'' constitutive law associating strains to stresses (note that this rule may depend on the position ${\bf x} \in \Omega$).
The operator $\Pi:V_h(E) \rightarrow [\Pp_k(E)]^2$ can be chosen as any projection operator on vector-valued polynomials of degree $k$, for instance one that minimizes the distance of the euclidean norm of the degree of freedom values (such particular choice has the advantage of being very simple to code, see for instance \cite{PartI}). 
The stabilization form can be taken, for instance, as
\begin{equation}\label{eq:stab}
s^E(\ub_h;(I-\Pi)\ub_h,(I-\Pi)\ub_h) = \alpha(\ub_h) \sum_{i=1}^{\# dofs} 
\Big({\rm dof}_i (\ub_h-\Pi\ub_h) \Big) \Big({\rm dof}_i (\vb_h-\Pi\vb_h) \Big) 
\end{equation}
where the ${\rm dof}_i(\cdot)$ symbol denotes evaluation at the $i^{th}$ local degree of freedom. 
The positive scalar $\alpha$ depending on $\ub_h$ is important in order to scale the discrete form correctly with respect to the constitutive law \cite{BLM}. It can, for instance, be taken as
\begin{equation}\label{eq:alfa}
\alpha(\ub_h) = \left\| \frac{\partial \sigmab}{\partial \epsb} (\Pi\ub_h({\bf x}_b))  \right\|
\end{equation}
with $\| \cdot \|$ denoting any (chosen) norm on the tangent tensor, that is here computed for $\Pi\ub_h$ evaluated at the baricenter ${\bf x}_b$ of the element. Note that the above stabilization, that is quite awkward to write on paper, is instead very simple to code since it is directly based on the degree of freedom values (that is what the code operates on).

In the numerical test of Sec.~\ref{sec:num} we adopt a more advanced stabilization, 
inspired from the so called ``D-recipe'' in \cite{apollo}, to whom we refer for more details on the underlying idea.
We take 
\begin{equation}
s^E(\ub_h;(I-\Pi)\ub_h,(I-\Pi)\ub_h) = \sum_{i=1}^{\# dofs} \alpha_i(\ub_h) \:
\Big({\rm dof}_i (\ub_h-\Pi\ub_h) \Big) \Big({\rm dof}_i (\vb_h-\Pi\vb_h) \Big)\,,
\label{eqn:stabD}
\end{equation}
with the positive scalars 
$$
\alpha_i(\ub_h) = \max{ \{\alpha(\ub_h), M_{ii} \} }
$$ 
where $\alpha(\ub_h)$, defined in \eqref{eq:alfa}, is computed at the previous converged incremental step, and $M_{ii}$ denotes the $i^{th}$ diagonal entry of the consistent tangent matrix (which is the consistent tangent matrix associated to the first term in \eqref{eq:form}) computed in the centroid of the element at the previous converged incremental step.
Finally, we note that other options for $s^E$ could also be adopted and that the method is quite robust in this respect.

Let $V_h^0$ represent the subspace of $V_h$ including the Dirichlet-type boundary conditions for the problem under study, that we consider homogeneous for simplicity of exposition.
Let ${\bf f}_h$ denote the piecewise polynomial function of degree $k-2$ that is given, on each element $E$, by the $L^2(E)$ projection of the volume loading ${\bf f}$ on $[\Pp_{k-2}(E)]^2$ (in the case $k=1$ we take the projection on 
$[\Pp_0(E)]^2$).
Assuming an incremental loading procedure with $M$ steps, the numerical method at each incremental step $i=1,2..,M$ is written as
\begin{equation}\label{eq:main}
\left\{
\begin{aligned}
& \textrm{Find } \ub^i \in V_h^0 \textrm{ such that } \\
& \sum_{E \in \Omega_h} a_h^E(\ub_h^i,\vb_h)  = \frac{i}{M} \int_E {\bf f}_h \cdot \vb_h
\qquad \forall \vb_h \in V_h^0 
\end{aligned}
\right.
\end{equation}
and the final solution to the problem taken as $\ub_h = \ub_h^M$. At each incremental step, a standard Newton-Raphson procedure is applied to solve the nonlinear problem. 

Also the right hand side above is computable for $k \ge 2$ using the volume (internal to elements) degrees of freedom, while for $k=1$ the above integral can be approximated by a vertex-based quadrature rule \cite{volley,autostoppisti,BLM,PartI} (see also the notes below). 

An analogous construction is applied in the presence on non-homogeneous boundary conditions (such as boundary tractions or enforced displacements), that are also treated incrementally. A series of remarks are in order. 

\smallskip\noindent
\emph{Alternative calculation of $\alpha$}. For $M > 1$ the scalar $\alpha$ in \eqref{eq:stab} can also be computed at the previous converged incremental step. Indeed, since the method is not sensible to such parameter, this will anyway allow for a satisfactory update of the material scaling factor. This choice carries the advantage of leading to a simpler tangent matrix in the Newton iterations, since the derivatives of the $\alpha$ term disappear in the calculation.

\smallskip\noindent
\emph{More accurate loading}. By applying an enhanced construction for the space $V_h$ (see Remark \ref{rem:enh}) one can also compute a more accurate loading, using ${\bf f}_h$ piecewise in $[\Pp_k(E)]^2$. The modification is exactly the same as for standard straight-edged VEM and thus not detailed here.

\smallskip\noindent
\emph{Inelastic case}. The extension to the inelastic case is done exactly as in the Finite Element Method. Indeed, one simply needs to keep track of the history variables at the chosen integration points (within each element) and apply at each point the constitutive law, exactly as in FEM (see for instance \cite{BLM,PartII}).  

\smallskip\noindent
\emph{Simplicity of the curved case}. Note that (at the practical level) the above construction follows the same logic and structure as for the straight-edge case \cite{BLM,PartI,PartII}. In the code, the main difference is only the need to integrate along curved edges and on curved domains (that can be handled following the literature given above). 

\section{Numerical examples}\label{sec:num}

The present section has twofold purposes.
First we give numerical evidence for error analysis on linear elasticity benchmarks with available analytic solution. Then we present a couple of classical benchmarks from computational mechanics which entail solution of equilibrium boundary value problems on curved domains in conjunction with nonlinear inelastic material behavior. Such examples numerically show the efficiency of the proposed curved methodology in real scale structural simulations. 

In the remainder of the section, three virtual element approaches will be invoked according to the following acronyms: VEM $co$ i.e. the original curvilinear VEM of Sec.~\ref{sec:1-1}, VEM $cv$ i.e. the variant curvilinear VEM of Sec.~\ref{sec:1-2}, and VEM $s$ i.e. the straight-edge VEM presented and validated in \cite{PartI}. As already mentioned, we use the stabilization in Equation~\eqref{eqn:stabD}.

\begin{remark}\label{rem:int}{\bf Integration rules}.
In the following numerical tests, in order to compute the volume integrals on (possibly) curved elements, we adopt the same integration algorithm used in \cite{BRV_curvi} (that in turn was taken from \cite{vianello1,vianello2,vianello3}) combined with a rotation of the cartesian coordinates. 
Such algorithm allows, for any given order $n \in {\mathbb N}$ and element $E$, to generate a set of points and weights yielding an integration rule that is exact for polynomials of degree up to $n$. In the presented tests, when combined with a suitable coordinate rotation, such algorithm yielded (in all cases considered) points that are internal to the element and weights that are positive.
Nevertheless, such algorithm has the drawback of using many integration points and thus needs to be combined with a compression rule for the sake of efficiency. 
In order to keep the positivity of the weights during the compression, we follow the compression rule of \cite{Artioli-Vianello} instead of the one used in \cite{BRV_curvi}. The final number of integration points is equal to $(n+1)(n+2)/2$. An investigation on the number $n$ to be used in the VEM analysis is shown in Example 1.
{ Finally, we point out that for all edge integrals (that are standard mapped one dimensional integrals on the reference interval), one can use a classical Gau\ss-Lobatto integration rule. The minimal required number of points (in analogy with the straight edge case) for a scheme of order $k$ are $k+1$ points, including the two extrema, but a higher number of points can be used to obtain a safer rule.}
\end{remark}

\subsection{Numerical tests for elastic materials}
\label{exe:Math}

In the present section we consider 2D elasticity problems with known solution.
The first numerical test investigates the convergence of the scheme and develops some comparisons. The second test gives numerical evidence that the novel virtual element space with curved edges proposed in Sec.~\ref{sec:1-2} contains rigid body motions.

In both examples we consider as domain $\Omega$ the circle centered at the origin with unit radius, and the following meshes:
\begin{itemize}
 \item \texttt{quadN}: a mesh composed by squares and a polygon at the center, see Fig.~\ref{fig:mesh} (a);
 \item \texttt{rhexN}: a mesh composed by regular hexagons inside the domain and arbitrary polygons close to the boundary, 
 see Fig.~\ref{fig:mesh} (b);
 \item \texttt{voroN}: a mesh composed by Voronoi cells, see Fig.~\ref{fig:mesh} (c).
\end{itemize}
All of the above polygons abutting the circle boundary have curved edges. Here \texttt{N} refers to the number of polygons in the mesh.
The last two meshes are generated with \texttt{Polymesher}~\cite{PolyMesher}.


\begin{figure}[!htb]
\begin{center}
\begin{tabular}{ccc}
\includegraphics[width=0.31\textwidth]{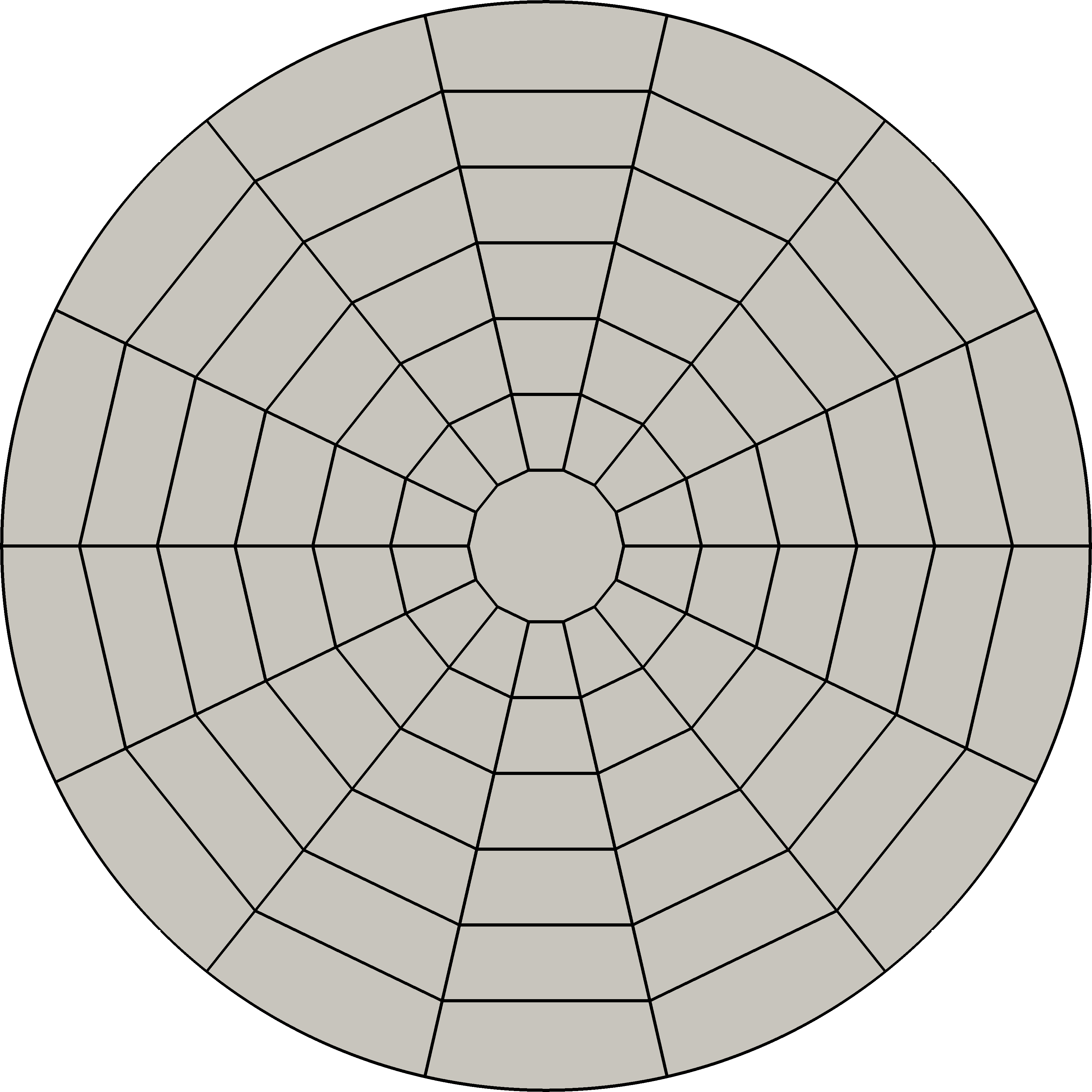} &
\includegraphics[width=0.31\textwidth]{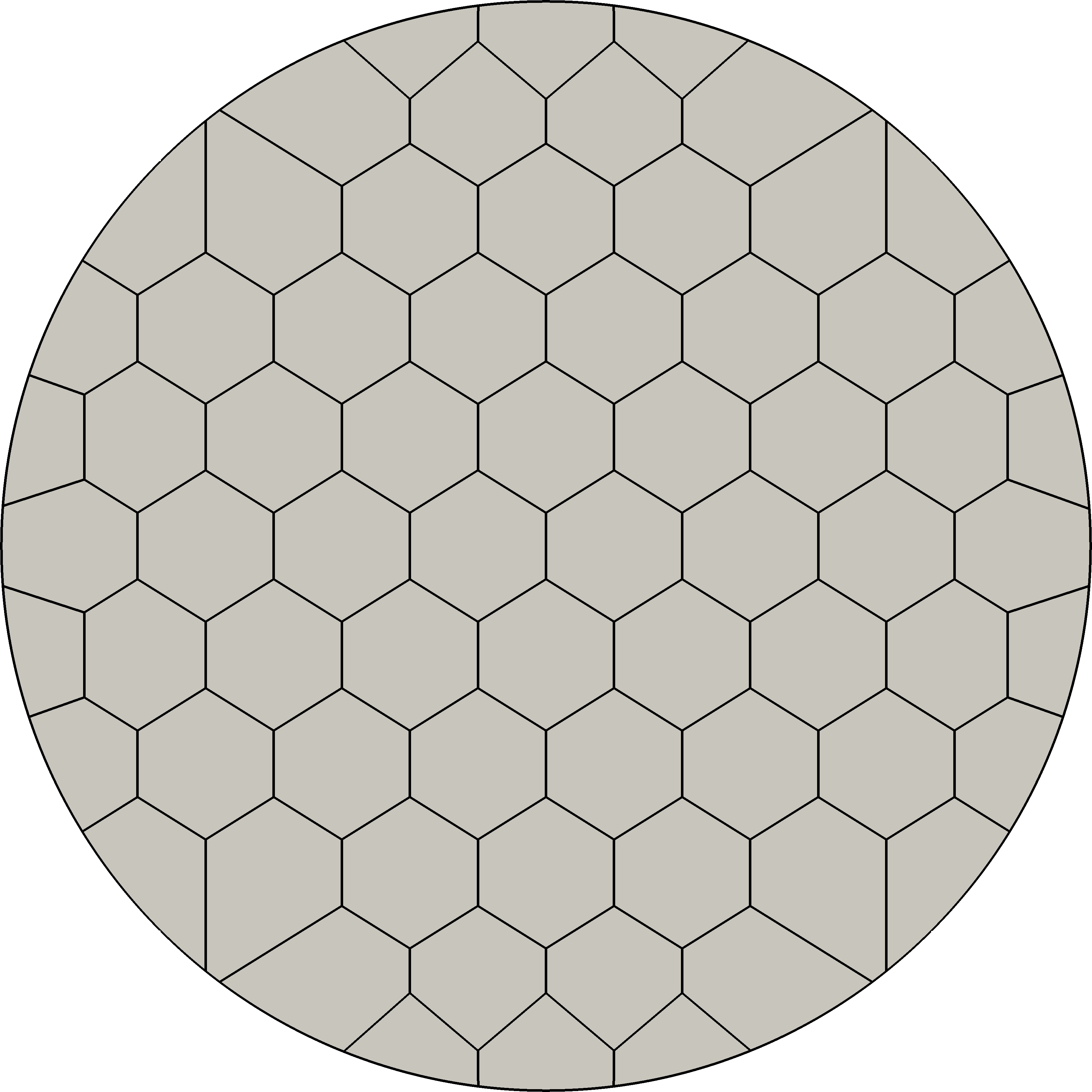} &
\includegraphics[width=0.31\textwidth]{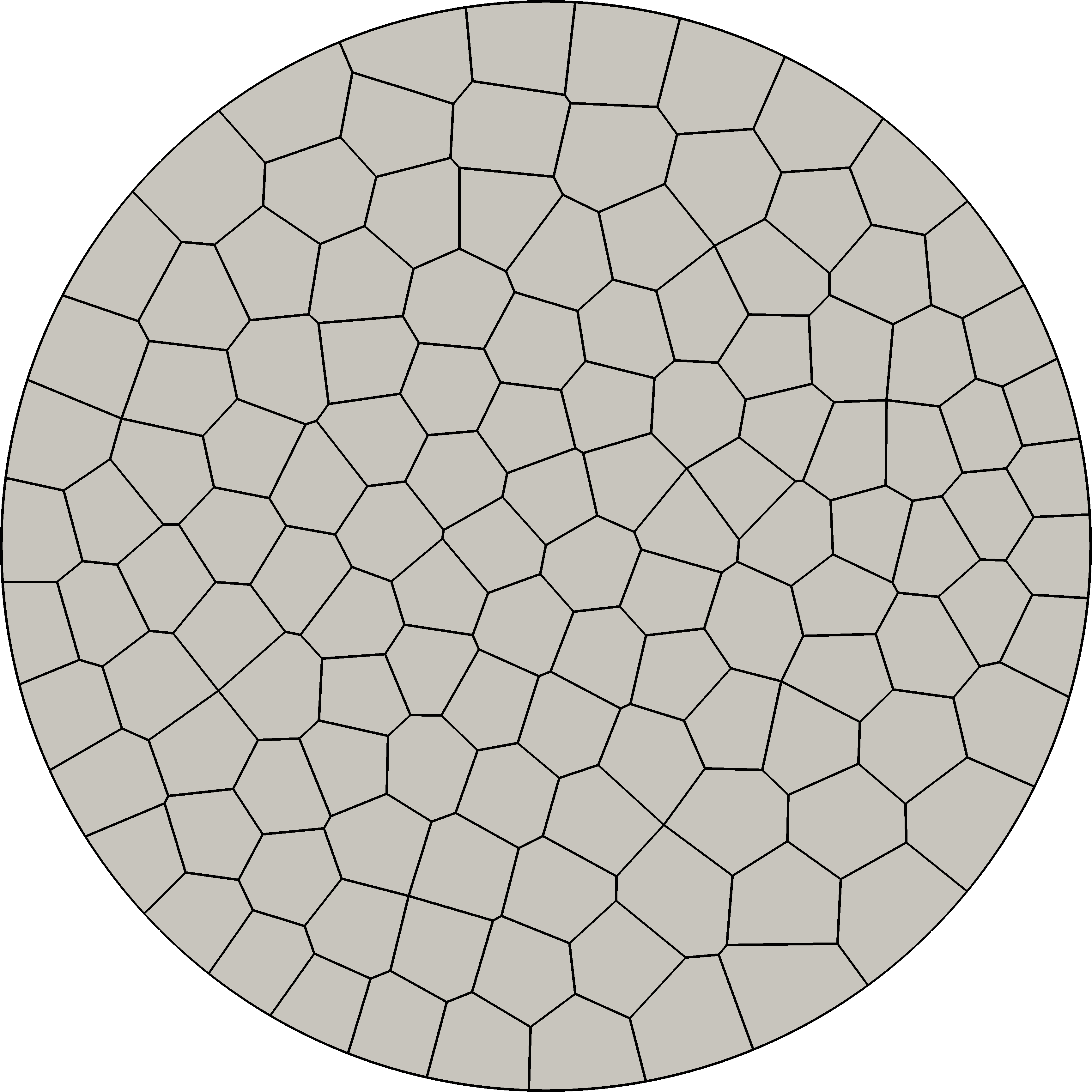} \\
(a) &(b) &(c)
\end{tabular}
\end{center}
\caption{Meshes used in Examples 1 and 2: (a) \texttt{quad80}, (b) \texttt{rhex70} and (c) \texttt{voro128}.}
\label{fig:mesh}
\end{figure}

In order to develop a convergence analysis, in the numerical tests we re-define the mesh-size $h$ as the mean value of element diameters, i.e.
$$
h = \frac{1}{N}\sum_{E\in \Omega_h} h_E\,,
$$
where $h_E$ is the diameter of element $E\in\Omega_h$.
For each mesh type, we consider a set of uniformly refined meshes and adopt the following error measures on displacement and strain field, respectively:
\begin{itemize}
 \item \textbf{$l^\infty$-error on displacement field:} 
 $$
 e^{\ub} := \frac{\|\ub-\ub_h\|_{l^\infty(\mathcal{N}_h)}}{\|\ub\|_{l^\infty(\mathcal{N}_h)}}\,,
 $$
 with the norm 
 $$
 \|\ub-\ub_h\|_{l^\infty(\mathcal{N}_h)}:=\max_{\xb \in \mathcal{N}_h} \|\ub(\xb)-\ub_h(\xb)\|\,
 $$
defined through the values of the solution on the mesh skeleton, i.e. at the vertex/edge nodes $\xb \in \mathcal{N}_h$, and where $\|\cdot\|$ is the Euclidean norm. Although we do not have a rigorous theoretical analysis for this norm, in analogy with Finite Elements the expected trend of such error is $O(h^{k+1})$ for VEM elements of order $k$ and sufficiently regular solutions.
\item \textbf{$L^2$-error on strain field:} in this case we exploit the projection operator $\Pi^{\epsb}$ and we compute
 $$
 e^{\epsb} := \frac{\|\epsb(\ub)-\Pi^{\epsb}\ub_h\|_{L^2(\Omega_h)}}{\|\epsb(\ub)\|_{L^2(\Omega_h)}}\,.
 $$
On the basis of the interpolation results of Proposition \ref{prop:interp} the expected trend of such error is $O(h^k)$ for VEM elements of order $k$ and sufficiently regular solutions.
\end{itemize}

\paragraph{Example 1. Convergence analysis.}

In this example we develop a convergence analysis for the proposed VEM curved elements, also comparing integration rules of different order.
Moreover, we compare the results obtained via the proposed approach 
(which can deal with curved elements and thus reproduce exactly the geometry of interest) and the standard VEM that uses straight polygonal meshes (and thus the geometry boundary is approximated by a piecewise linear curve). In the present tests we do not distinguish between the scheme VEM $co$ of Sec.~\ref{sec:1-1} and the scheme VEM $cv$ of Sec.~\ref{sec:1-2} since they yielded almost identical results.

We consider a nonlinear elastic material, characterized by the 
Hencky-von Mises constitutive law~\cite{lawPrimo}
$$
\sigmab(x, \nabla \ub) := 
\tilde{\lambda}(\text{dev}(\epsb(\ub)))\,\text{tr}(\epsb(\ub))\,\textbf{I} + 
\tilde{\mu}(\text{dev}(\epsb(\ub)))\,\epsb(\ub)
$$
where
$$
\tilde{\mu}(\rho) :=  \frac{3}{4}(1+(1+\rho^2)^{-1/2})\cdot10^4 \text{MPa}\,\qquad\text{and}\qquad
\tilde{\lambda}(\rho) := \frac{3}{4}(1-2\tilde{\mu}(\rho))\cdot10^4 \text{MPa}\qquad\forall\rho\in\mathbb{R}^+
$$
are the non-linear Lam\'e functions, $
\epsb(\ub) := \frac{1}{2}(\nabla \ub + (\nabla \ub)^t)$
is the small deformation strain tensor, $\text{tr}(\taub)$ is the trace operator and 
$$
\text{dev}(\taub) := \left\|\left(\taub - \frac{1}{2}\text{tr}(\taub)\textbf{I}\right)\right\|_{\mathcal{F}}\,,
$$
is the Frobenius norm of the deviatoric part of the tensor $\taub$.

The volume force density field and the homogeneous Dirichlet boundary conditions are chosen in such a way that the solution of Problem~\ref{eq:con-pbl}
$$
\ub(x,\,y) = \left( \begin{array}{r} 
                     \sin(\pi(x^2+y^2))\\
                    2\cos(\pi(x^2+y^2))
                    \end{array}\right)\,.
$$

The problem is solved with two VEM approaches: 
the VEM co for curved geometry described in Sec.~\ref{sec:1-1} and the standard VEM s displacement-based VEM method~\cite{volley,BLM,PartI,PartII} with straight edges (that approximates the domain of interest by a straight edge polygonal). We report numerical evidence for method degrees $k=1,2,3$.

In order to investigate the effect of the adopted integration rule, we first compare two different options for quadrature. For the \emph{minimal} quadrature rule we use an integration rule of order $n=2k-2$ for volume integrals (see Remark \ref{rem:int}), that is the minimal one that would guarantee exact integration for a linear elastic problem with constant coefficients. For the \emph{higher-order} quadrature rule we instead use an integration rule of order $n=2k$, that is more accurate but more expensive (expecially in view of using the scheme for inelastic problems).
{ Analogously, for edge integrals we use a Gau\ss-Lobatto rule with $k+1$ points in the \emph{minimal} quadrature case, and $k+2$ points in the \emph{higher-order} case.}

In Figs.~\ref{fig:exe1U} and~\ref{fig:exe1Eps} we show the convergence plots in terms of the above introduced error norms for the curved VEM co elements, comparing the minimal and the higher-order rules. We can see that in all cases the convergence slopes are the expected optimal ones ($O(k^{k+1})$ for nodal displacement error and $O(h^k)$ for the strain $L^2$ error). When considering the displacement error, the higher-order rule yields slightly more accurate (although of the same convergence rate) results. Our conclusion is that, in general, the minimal rule seems the better choice since it provides similar results with fewer quadrature points.

In Fig.~\ref{fig:exe1nogeo} we depict the error curves for the straight VEM s elements, that is without an exact geometry approximation. We use the high-order quadrature rule. Due to the error introduced by the geometry approximation, for $k \ge 2$ one obtains a sub-optimal convergence rate. When comparing the results of Fig.~\ref{fig:exe1nogeo} with those for curved elements in Figs.~\ref{fig:exe1U} and \ref{fig:exe1Eps} the advantage of having an exact geometry approximation appears clearly, expecially for higher values of $k$ and finer meshes.

\begin{figure}[!htb]
\begin{center}
\begin{tabular}{cc}
\includegraphics[height=0.4\textwidth]{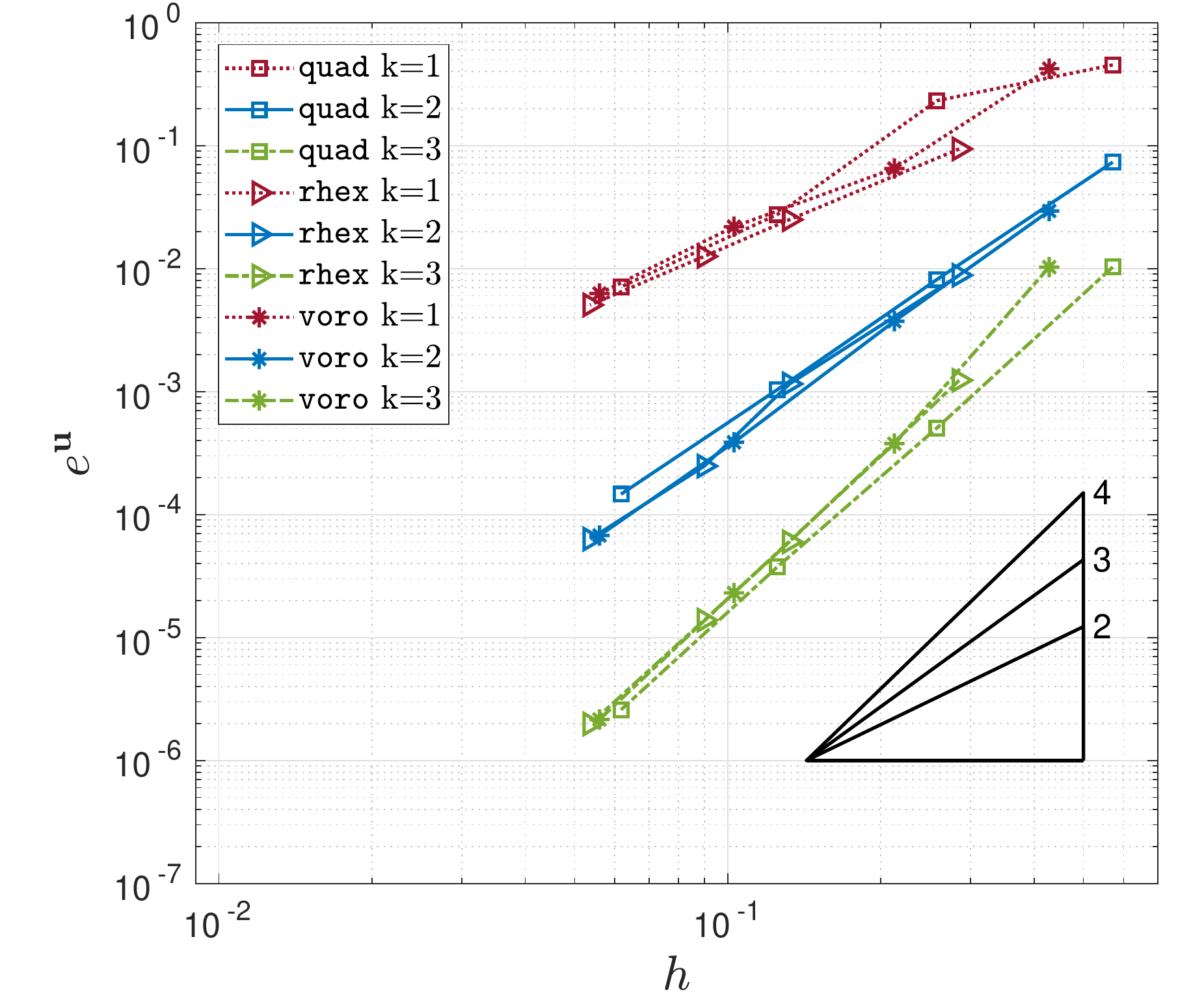} &
\includegraphics[height=0.4\textwidth]{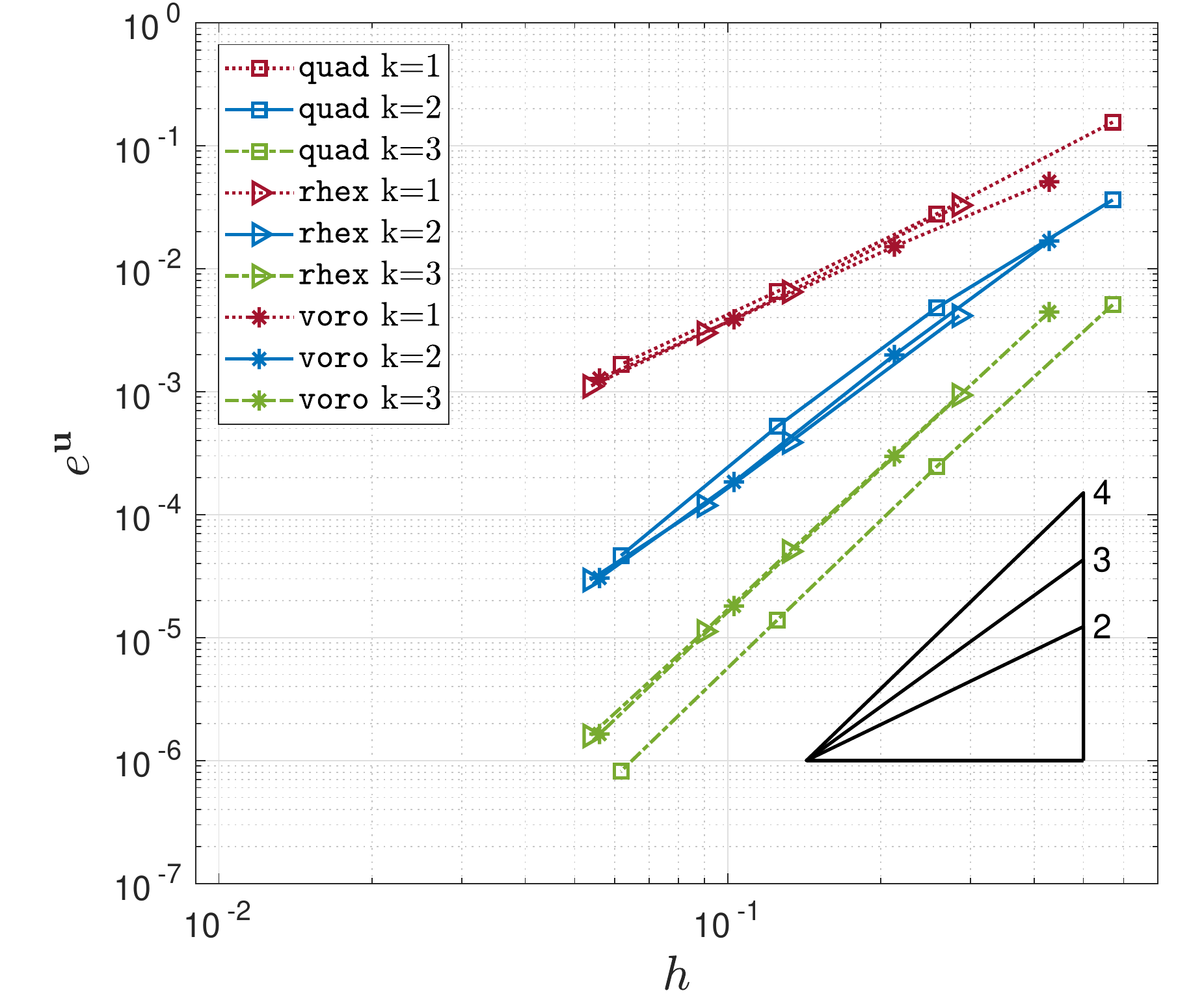}\\
(a) & (b)
\end{tabular}
\end{center}
\caption{{Example 1 (curved VEM co elements): convergence plot for $e^\ub$
with minimal quadrature (a), and higher order quadrature, (b).}}
\label{fig:exe1U}
\end{figure}

\begin{figure}[!htb]
\begin{center}
\begin{tabular}{cc}
\includegraphics[height=0.4\textwidth]{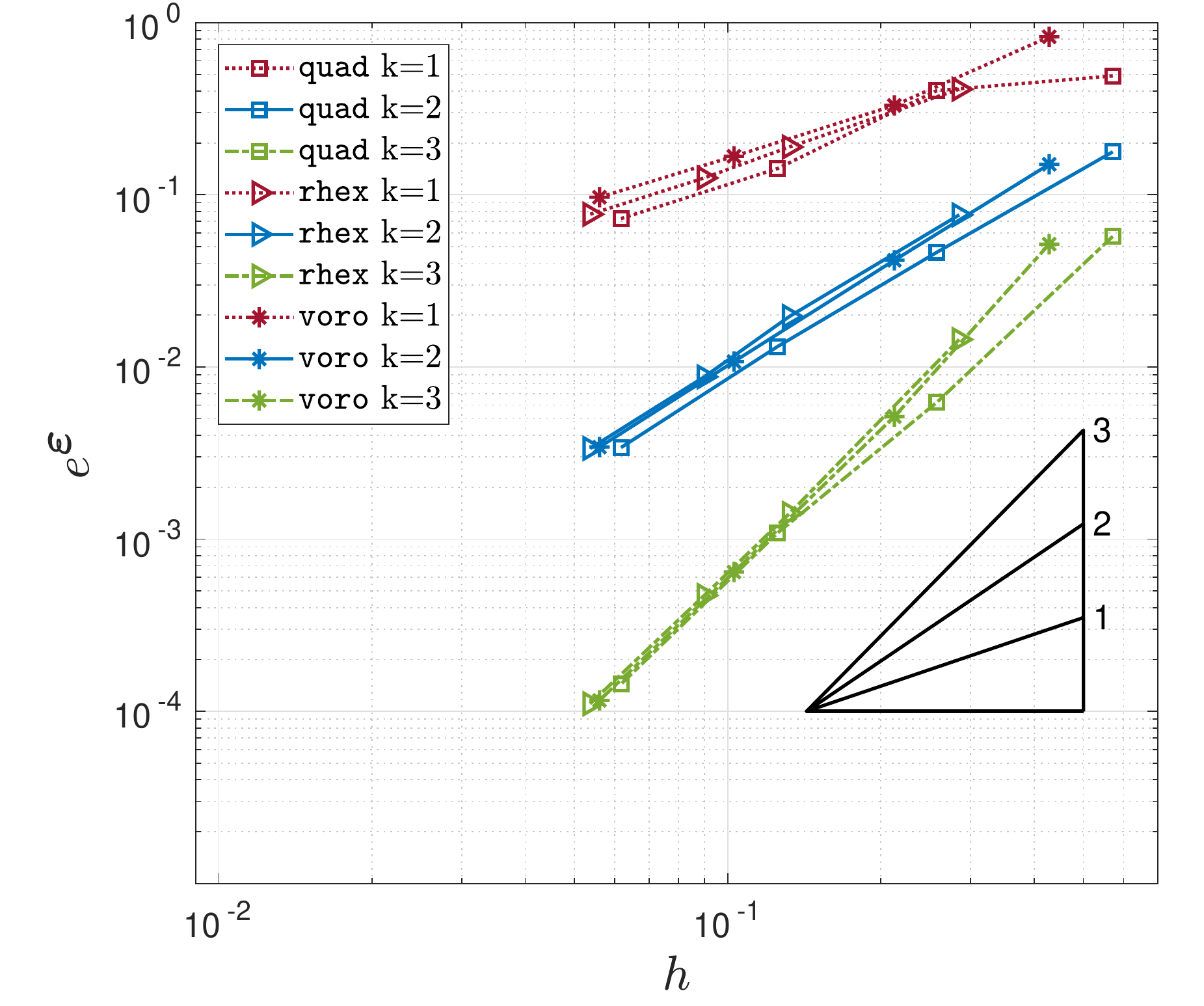} &
\includegraphics[height=0.4\textwidth]{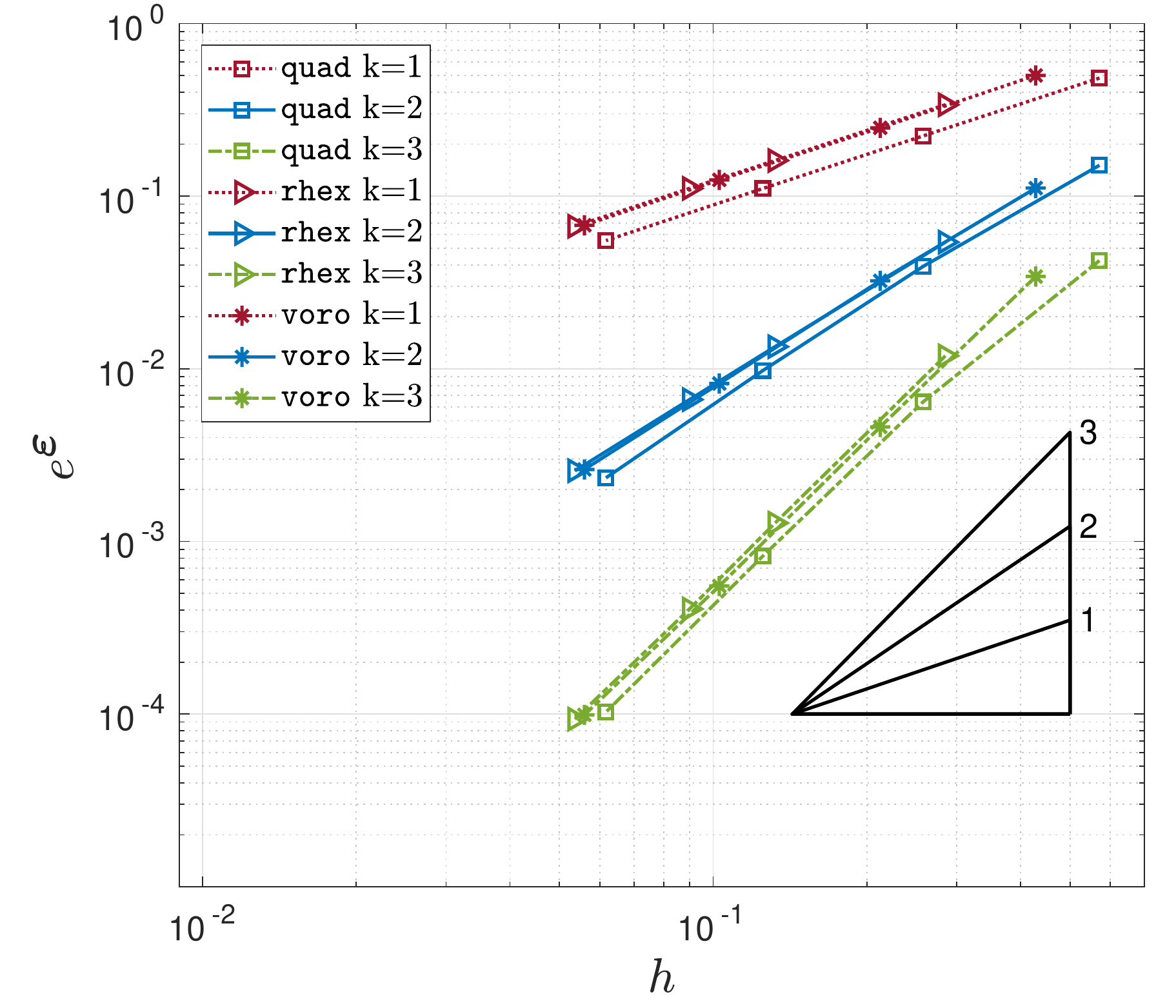}\\
(a) & (b)
\end{tabular}
\end{center}
\caption{{Example 1 (curved VEM co elements): convergence plot for $e^{\epsb}$ with minimal quadrature (a), and higher order quadrature, (b).}}
\label{fig:exe1Eps}
\end{figure}

\begin{figure}[!htb]
\begin{center}
\begin{tabular}{cc}
\includegraphics[height=0.4\textwidth]{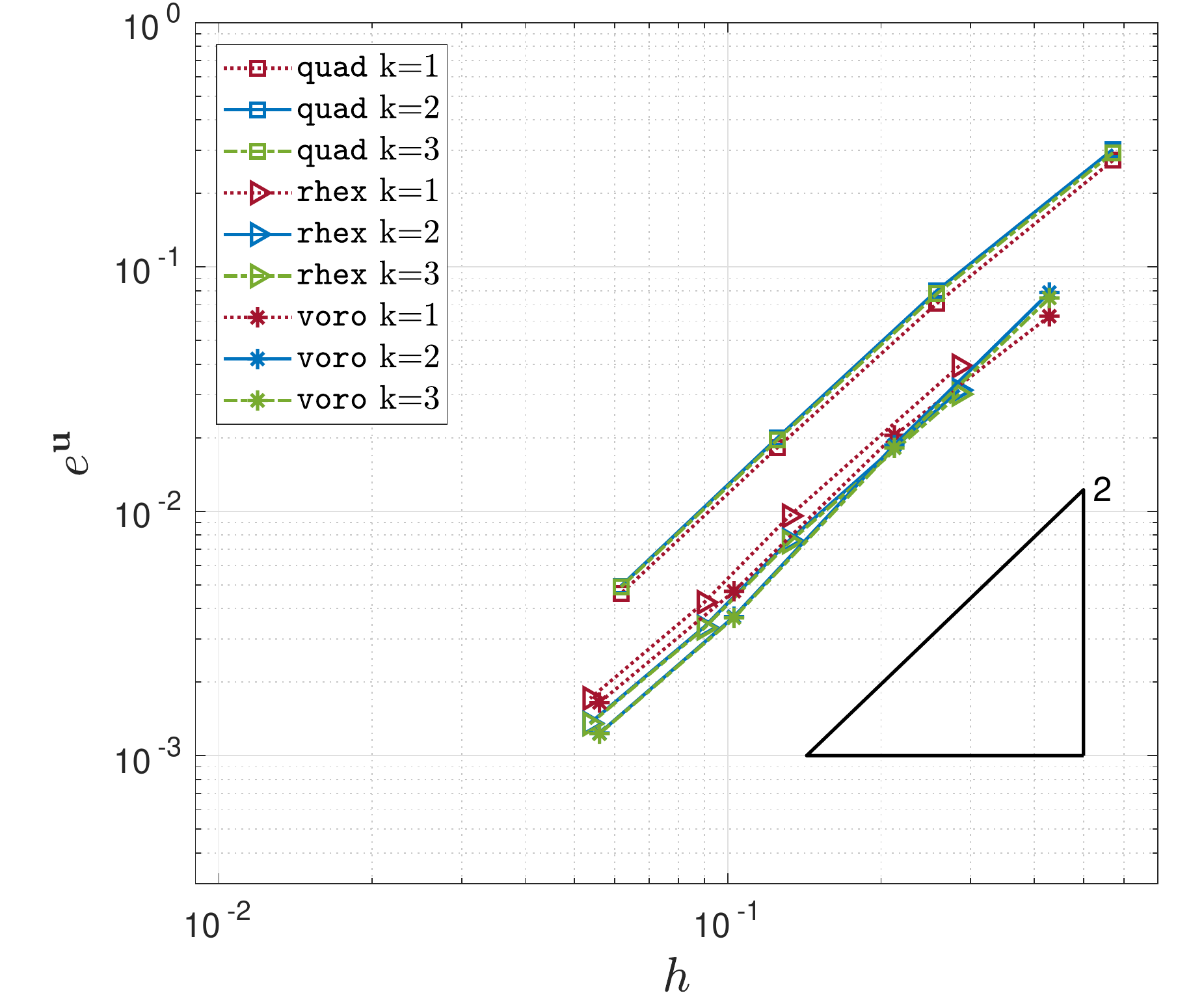} &
\includegraphics[height=0.4\textwidth]{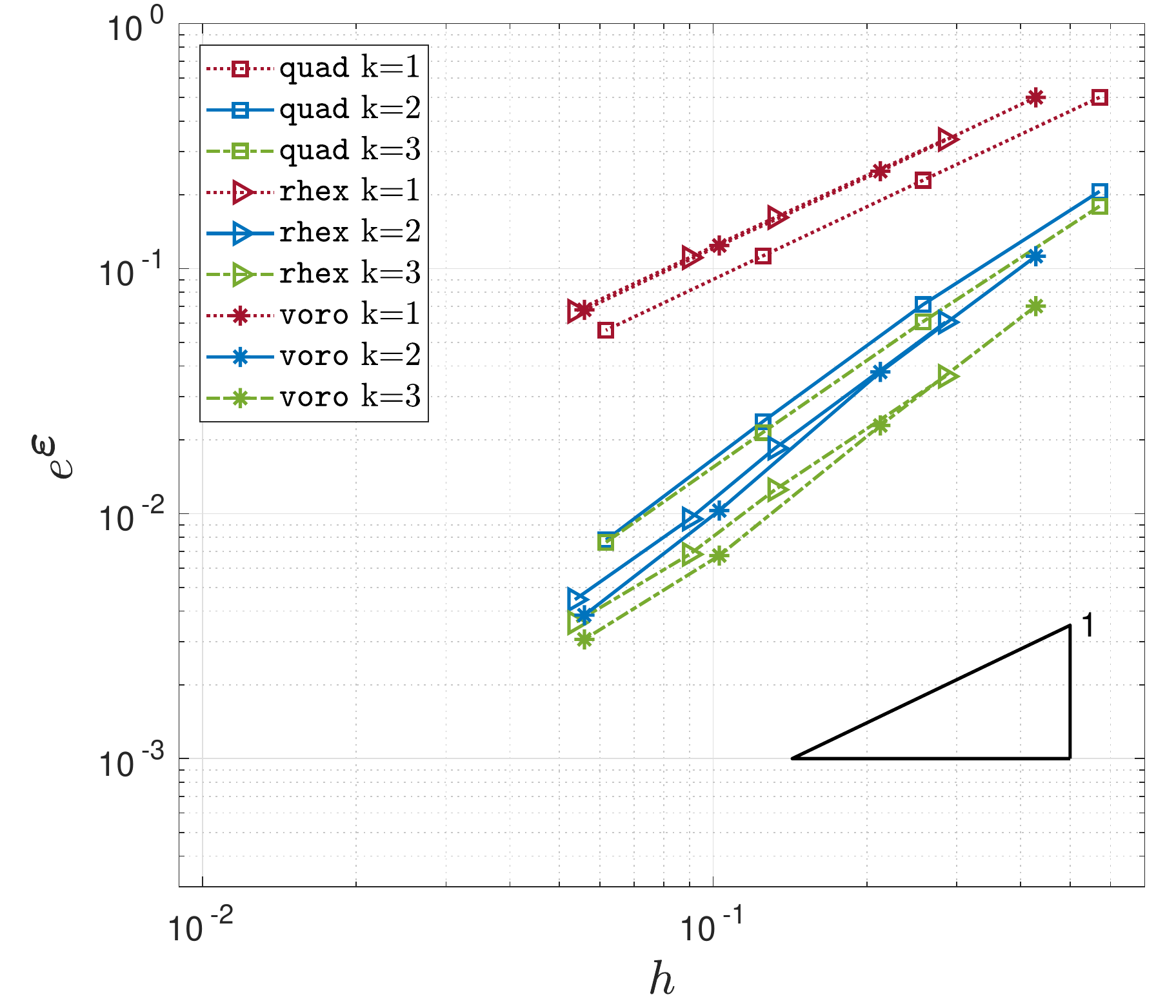}\\
(a) & (b)
\end{tabular}
\end{center}
\caption{{Example 1 (straight VEM s elements): convergence with high order quadrature, error $e^\ub$ (a) and error plot $e^{\epsb}$ (b).}}
\label{fig:exe1nogeo}
\end{figure}

\paragraph{Example 2. Rigid body motions.}

In this paragraph we compare the virtual element approach VEM $co$ of Sec.~\ref{sec:1-1}, and its variant VEM $cv$ of Sec.~\ref{sec:1-2}.
The main difference between these two methods relies on the inclusion of the rigid body motion in the virtual displacement space.
Indeed, both of them are defined for curved elements and they will present an optimal convergence rate, but \emph{only} the latter method VEM $cv$ will guarantee exact rigid body motion representation in the solution.

To give a numerical evidence of this feature, we consider a linear elastic problem, where the volume force density is zero.
As in the previous test, we consider a unit radius circle centered at the origin, with Dirichlet boundary conditions set in such a way that the exact solution is the vector field (representing a rigid body motion)
\begin{equation}
\ub(x,\,y) = \left( \begin{array}{r} 
                     -y\\
                      x
                    \end{array}\right)\,.
\label{eqn:solExe2}
\end{equation}
All volume and edge integrals are computed with a very high quadrature rule in this test, in order to eliminate the effects of numerical quadrature in the comparison.

In Table~\ref{tab:exe2PatchU}  we report the $e^\ub$ error, $k=1,2,3$, for the two curvilinear VEM variants and some sample meshes. While it is evident that the VEM $cv$ is exact up to machine precision, the error levels associated with the VEM $co$ are much higher. This shows that the virtual element space in the new variant (VEM $cv$) contains the solution of the problem at hand, 
see Equation~\eqref{eqn:solExe2},
while the standard space (VEM $co$) does not contains such solution.


\begin{table}[!htb]
\begin{center}
\begin{tabular}{|c|cc|cc|cc|}
\cline{2-7}
\multicolumn{1}{c}{}&
\multicolumn{2}{|c|}{\texttt{quad481}} &
\multicolumn{2}{c|}{\texttt{rhex626}} &
\multicolumn{2}{c|}{\texttt{voro1800}}\\
\cline{2-7}
\multicolumn{1}{c|}{}&VEM $co$ &VEM $cv$ &VEM $co$ &VEM $cv$ &VEM $co$ &VEM $cv$\\
\hline
$k=1$ &2.2018e-03 &1.0847e-15 &4.0851e-04 &3.2530e-13 &4.4464e-04 &1.0139e-15 \\
$k=2$ &1.0379e-06 &5.4964e-15 &4.1290e-08 &1.6785e-13 &4.3888e-08 &6.9502e-15 \\
$k=3$ &5.6965e-08 &2.4372e-14 &1.9404e-09 &9.8653e-14 &1.6537e-09 &1.0776e-14 \\
\hline
\end{tabular} 
\end{center}
\caption{Example 2 (rigid body motions): error $e^{\ub}$ for different mesh types and VEM approximation degree.}
\label{tab:exe2PatchU}
\end{table}

\subsection{Numerical tests for inelastic materials}\label{exe:Eng}

In this section we consider two classical benchmark examples involving inelastic materials.
Since we do not have a reference solution for such problems, we use the software FEAP on a very fine mesh to validate the results.

\paragraph{Example 3. Thick-walled viscoelastic cylinder subjected to internal pressure.}
\label{ss:viscocyl}
We consider a classical numerical test regarding a thick-walled cylinder characterized by a viscoelastic constitutive response~\cite{Zienkiewicz_Taylor_Fox}.  The cylinder has inner [resp. outer] radius $R_i = 2$ [$R_o=4$] and is subjected to uniform pressure $p$ on the inner surface, see Fig. \ref{fig:pressure_cylinder_geom}. The material is isotropic and obeys a viscoelastic constitutive law as outlined Sec. 3.1 of reference \cite{PartII}. The material properties are set assuming $M=1$ and $\lambda_1 \equiv  \lambda = 1$, i.e. a {\it standard linear solid} is considered \cite{Zienkiewicz_Taylor_Zhu13}. Young's modulus and Poisson's ratio are $E = 1000$, $\nu = 0.3$, respectively. Two sets of viscoelastic parameters are adopted for the present analysis, i.e. $\left( \mu_0 , \mu_1 \right)_{\veOne} = \left( 0.01, 0.99 \right )$ and, $\left (\mu_0 , \mu_1 \right)_{\veTwo} = \left( 0.3, 0.7 \right )$, respectively. 
The former case is devised in a way that the bulk-to-shear moduli ratio for instantaneous loading is given by $K/G(0) = 2.167$ and for long time loading, say at $t=8$, by $K/G(8) = 216.7$, which indicates a nearly incompressible behavior for sustained loading (for instance, at $t = \infty$ the Poisson ratio results $0.498$). The second material set indicates an intermediate response at $t = \infty$ after loading is  applied. Plane strain assumption is applied in this numerical simulation.

For symmetry, only a quarter of the cylinder cross section is meshed, as reported in Fig.~\ref{fig:meshesExe3}, where structured convex curvilinear quadrilaterals (a), and unstructured convex quadrilaterals (b) meshes are portrayed, respectively. Zero normal displacement is enforced along straight edges. The structural response for an internal pressure $p = 10$ applied at $t=0$ and kept constant until $t=20$ is computed through $20$ equally spaced time instants by using the generalized Maxwell model in Prony series form, which, albeit intrinsically three-dimensional, translates naturally in the present two-dimentional context \cite{Zienkiewicz_Taylor_Fox}. 

A displacement accuracy analysis is drawn between the proposed curved VEM $co$/$cv$ formulations, for $k=1,2,3$, with a reference solution reported for comparison. Such reference solution is obtained with quadratic quadrilateral displacement based finite elements with nine nodes $Q9$ running in the FEAP platform \cite{Zienkiewicz_Taylor_Fox} using an extremely fine mesh.
Since the minimal quadrature rule was shown in the previous section to be able to grant the correct convergence rates and is cheaper in terms of computational costs, we use such rule in the present tests.

The integration-step versus displacement curves for control points A and B (see Fig. \ref{fig:pressure_cylinder_geom}) is shown in Fig. \ref{fig:visco_disp} (a)-(b) for the compared solutions together with the reference one, for the two material parameter sets ${\veOne}$ and ${\veTwo}$ introduced above. We here show only the results for the (more demanding) distorted meshes (b), the ones for other case being analogous. It is observed that compared VEM formulations present a response in excellent agreement with the reference solution also for non structured and distorted discretizations of the domain, thus proving the efficiency of the proposed formulation.

{
We consider a quadrilateral mesh and we take a subset of quadrature points deployed along a radial direction, see the quadrature points high-lighted with crosses in Fig.~\ref{fig:sigma_rho}~(a).
According to the theory proposed in~\cite{Zienkiewicz_Taylor_Fox}, 
the radial stress component, $\sigma_{\rho}$, has a regular trend from -10 to 0, 
see Fig.~4.5~(b) of~\cite{Zienkiewicz_Taylor_Fox}.
In such example, standard displacement based finite elements often display 
an irregular trend characterized by spurious oscillations \cite{Zienkiewicz_Taylor_Fox}.
On the contrary, the proposed virtual element method results free from spurious oscillations and it is better aligned with the reference trend,   
compare Fig.~\ref{fig:sigma_rho}~(b) of the present paper and Fig.~4.5~(b) of~\cite{Zienkiewicz_Taylor_Fox}.}

\begin{figure}
\begin{center}
\includegraphics[bb=0 0 400 800, angle=0, scale=0.30]{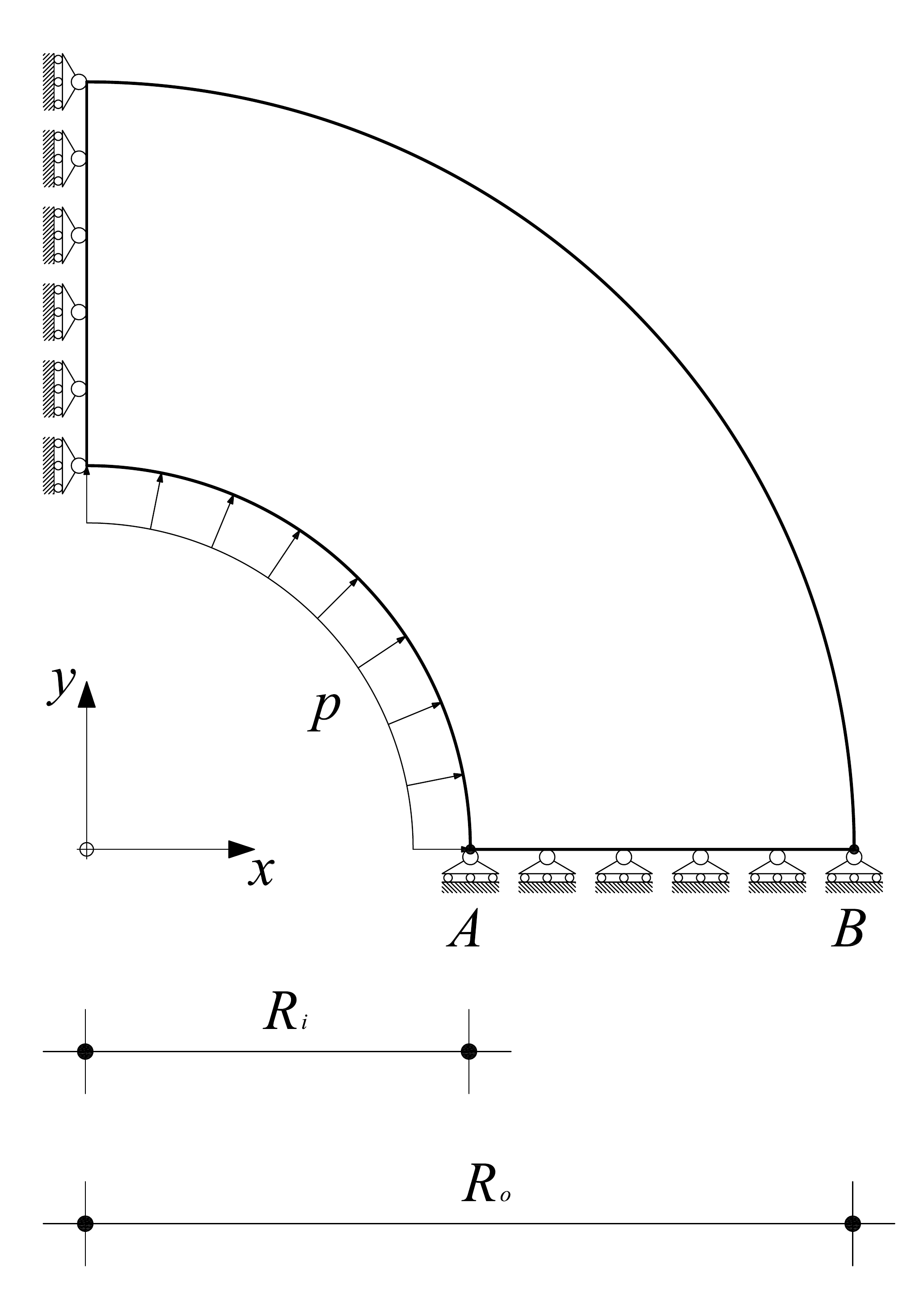}
\end{center}
\caption{Example 3. Thick-walled viscoelastic cylinder subjected to internal pressure. Geometry, boundary conditions, applied load.}
\label{fig:pressure_cylinder_geom}
\end{figure}

\begin{figure}[!htb]
\begin{center}
\begin{tabular}{cc}
\includegraphics[width=0.48\textwidth]{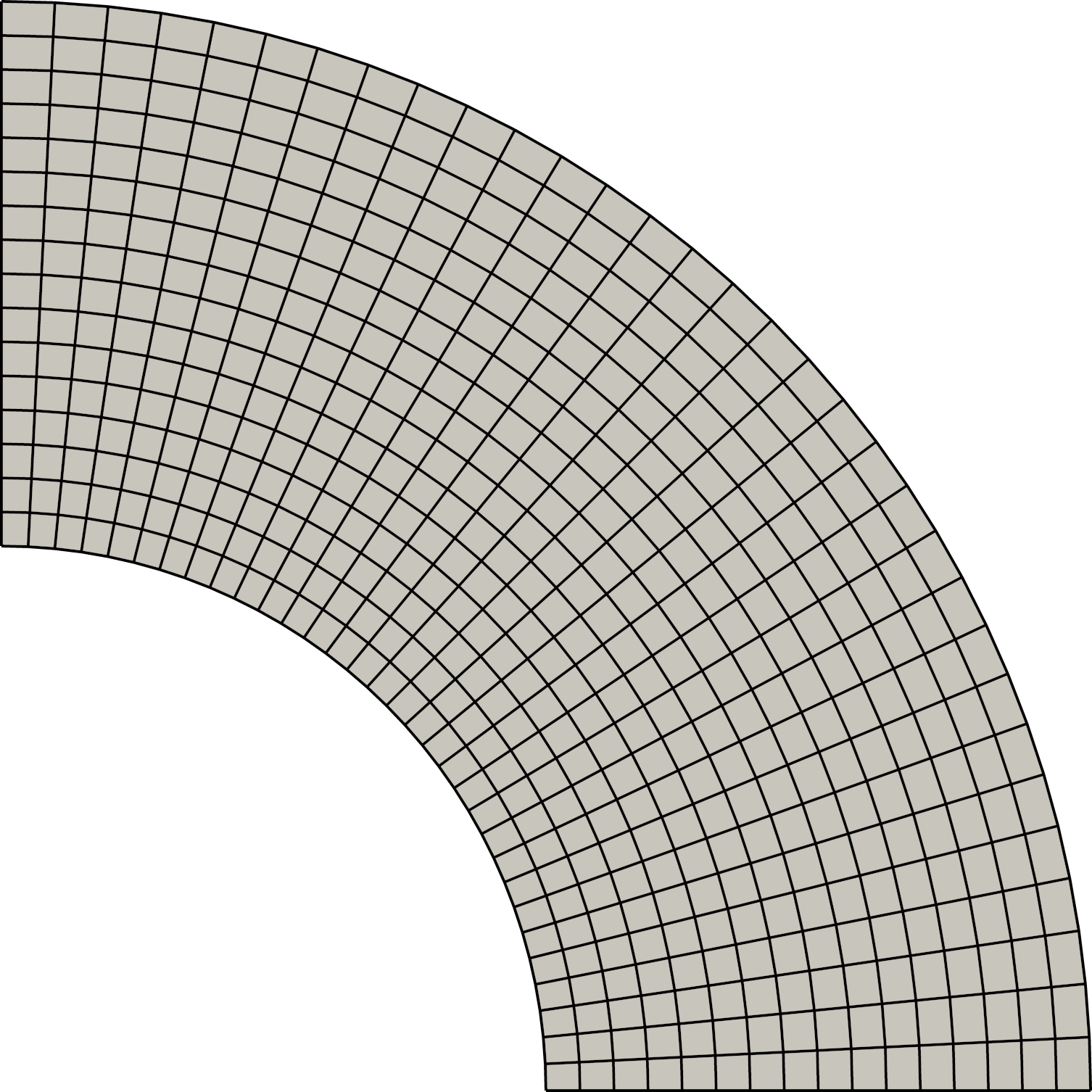} &
\includegraphics[width=0.48\textwidth]{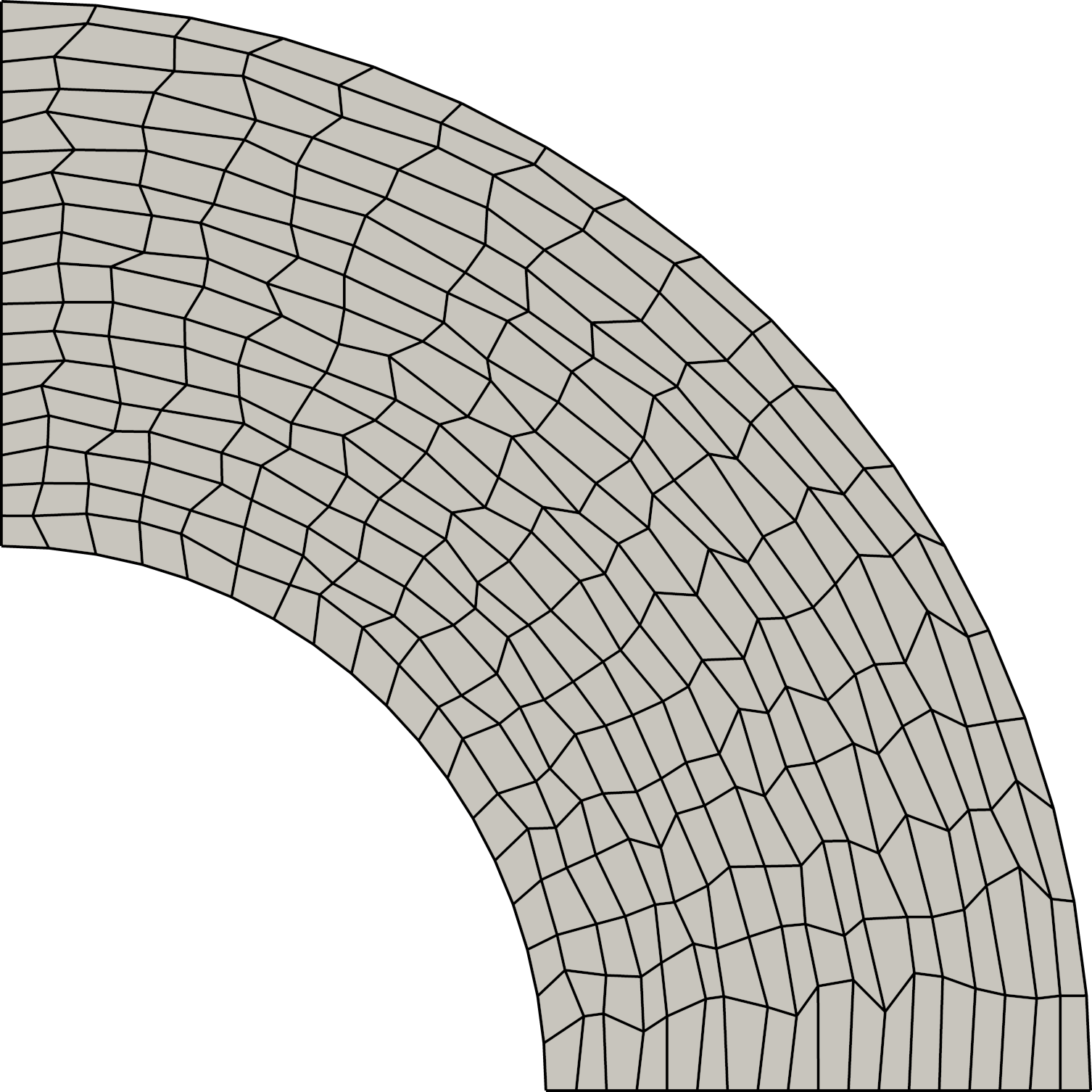} \\
& \\
(a) & (b)
\end{tabular}
\end{center}
\caption{Example 3. Thick-walled viscoelastic cylinder subjected to internal pressure. (a) Structured quadrilateral mesh composed by 272 elements. (b) Unstructured quadrilateral mesh composed by 324 elements.}
\label{fig:meshesExe3}
\end{figure}

\begin{figure}
\begin{center}
\begin{tabular}{cc}
\includegraphics[width=0.48\textwidth]{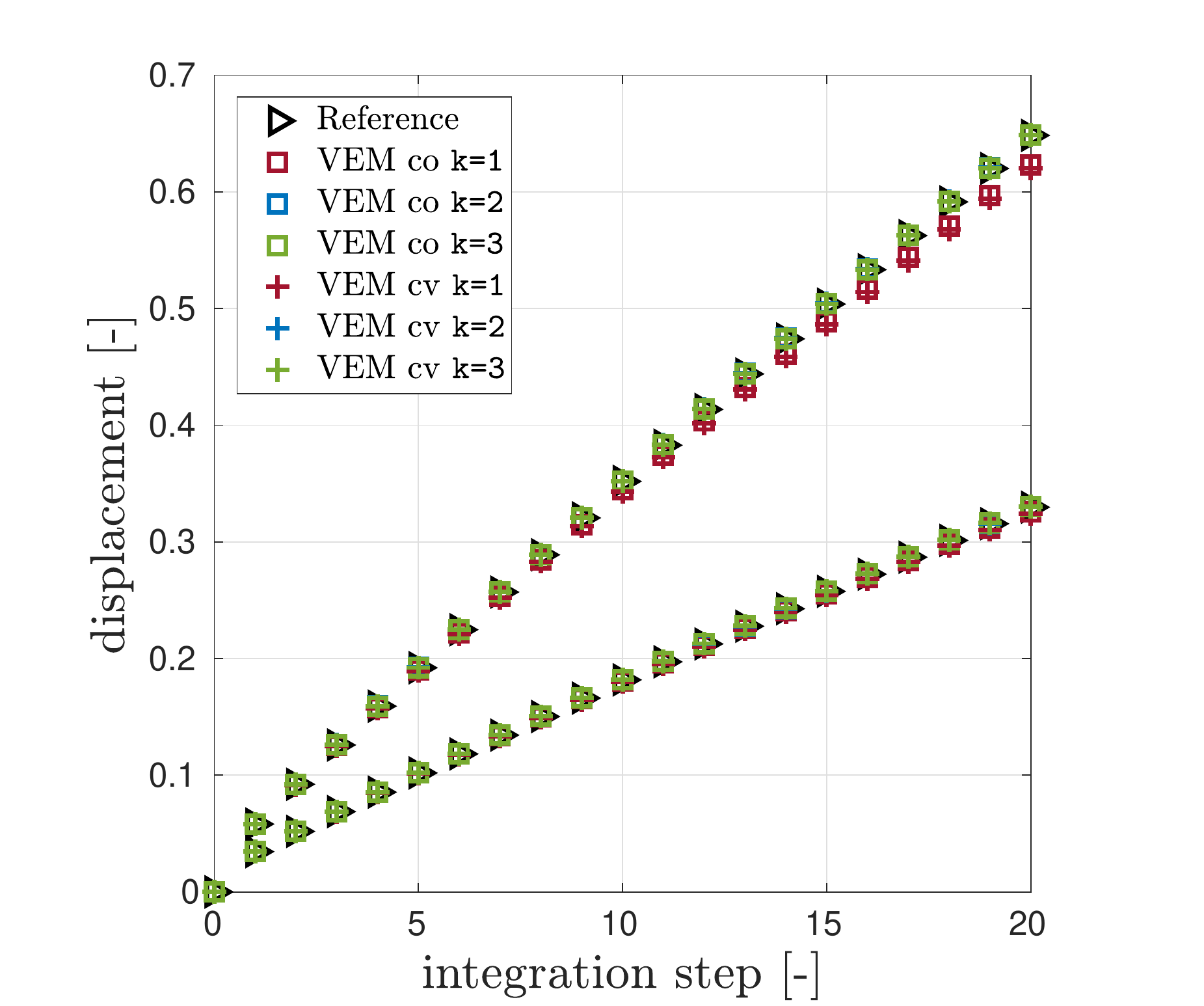} &
\includegraphics[width=0.48\textwidth]{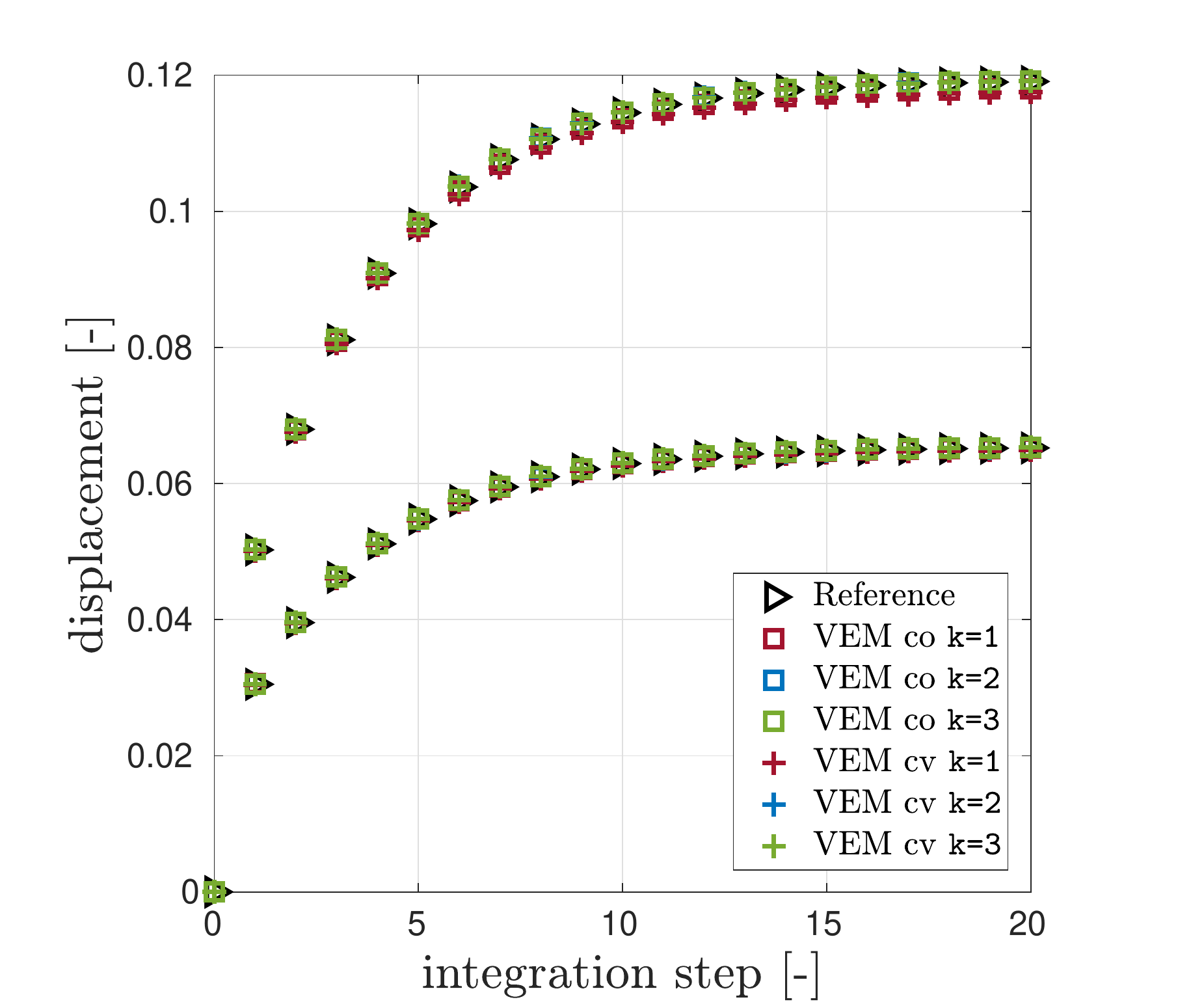}\\
(a) & (b)
\end{tabular}
\end{center}
\caption{Example 3. Thick-walled viscoelastic cylinder with internal pressure. Integration step {\it vs.} radial displacement curves for control points $A$ (higher curve), and $B$ (lower curve). (a) case $\left( \mu_0 , \mu_1 \right)_{\veOne} = \left( 0.01, 0.99 \right )$; (b) case $\left (\mu_0 , \mu_1 \right)_{\veTwo} = \left( 0.3, 0.7 \right )$.}
\label{fig:visco_disp}
\end{figure}

\begin{figure}
\begin{center}
\begin{tabular}{cc}
\includegraphics[width=0.48\textwidth]{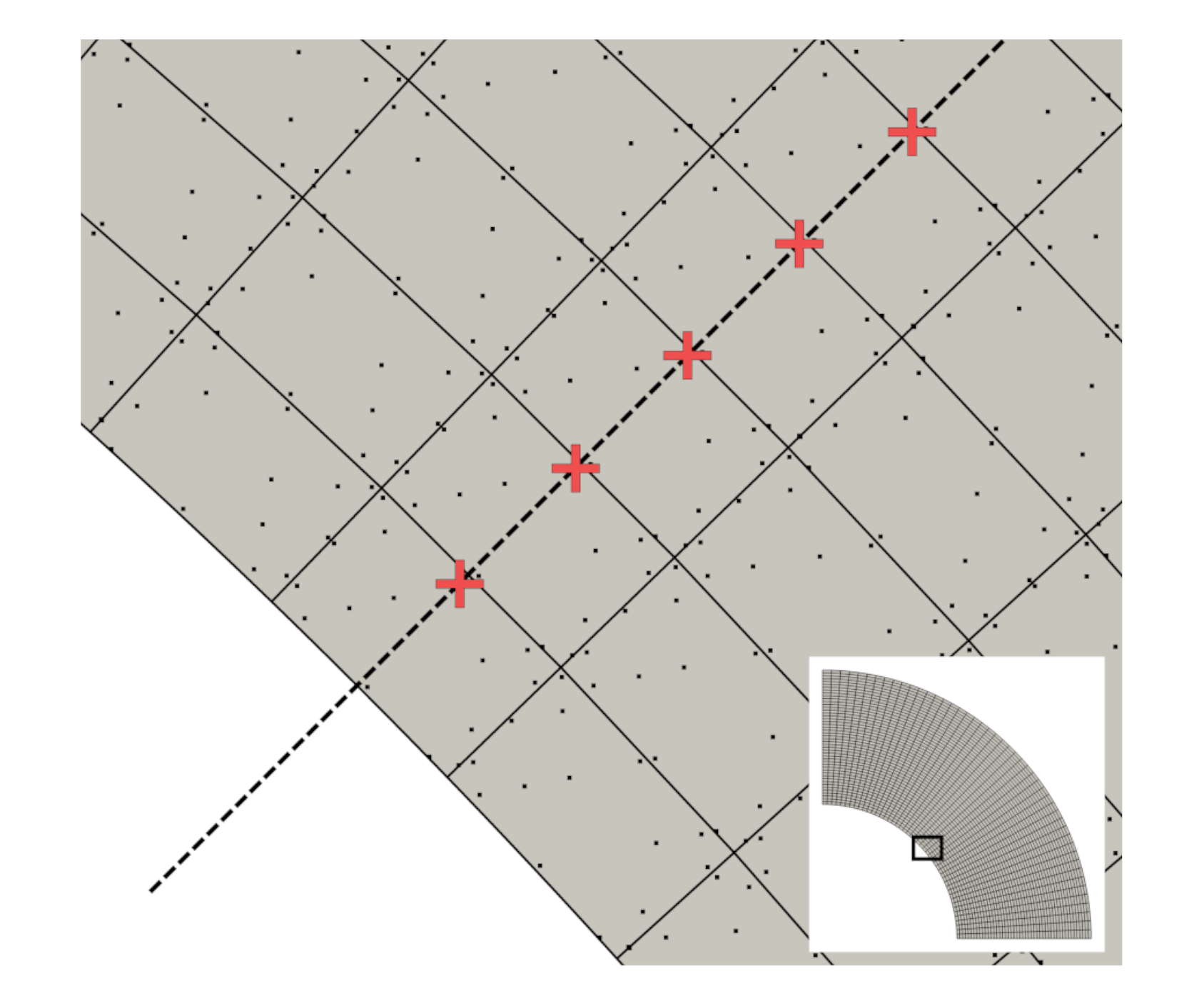} &
\includegraphics[width=0.48\textwidth]{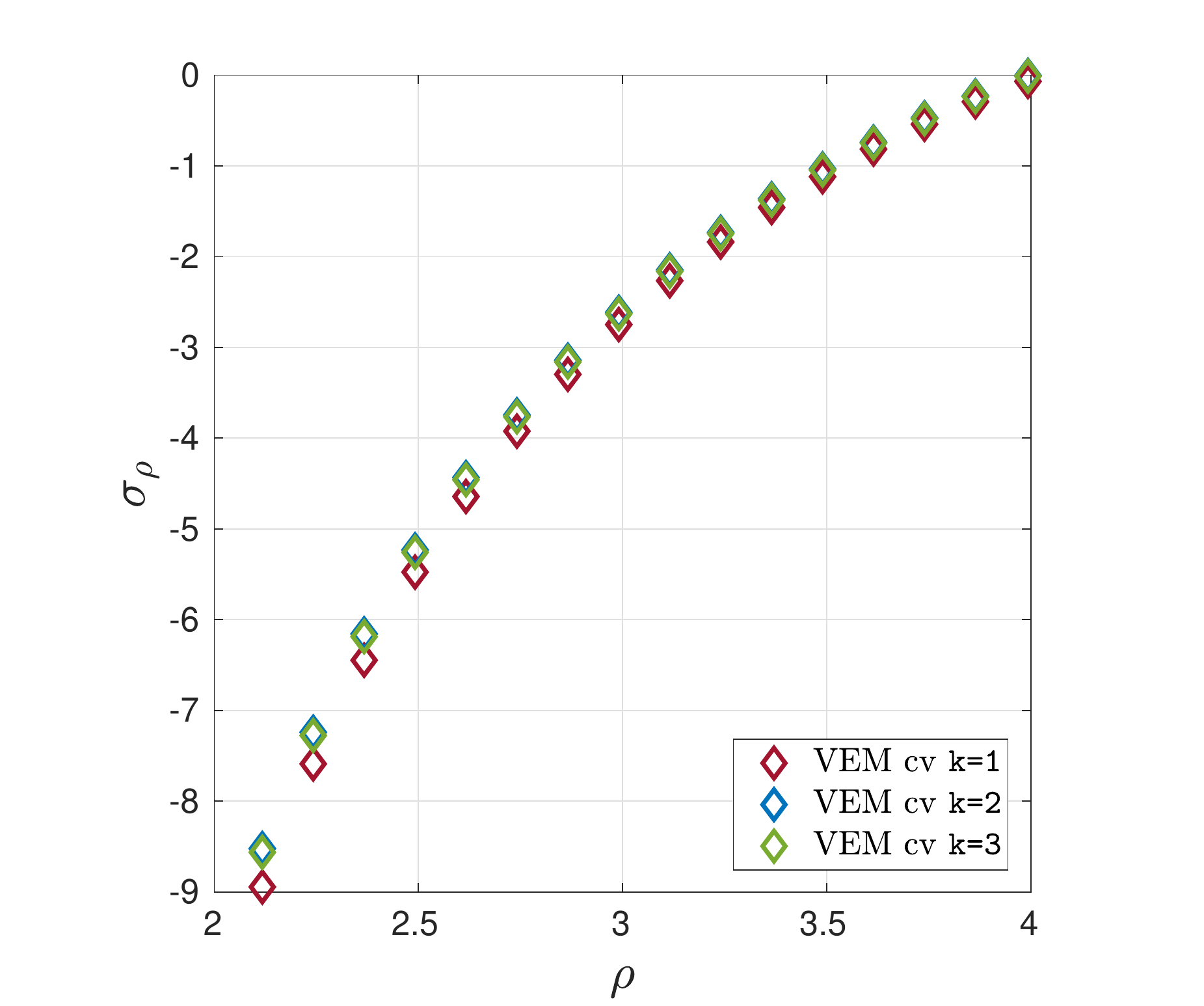} \\
(a) & (b)
\end{tabular}
\end{center}
\caption{Example 3. Thick-walled viscoelastic cylinder with internal pressure. (a) Location of radial quadrature nodes. (b) Radial plot for $\sigma_{\rho}$ stress component.}
\label{fig:sigma_rho}
\end{figure}

We finally compare again the accuracy of the novel VEM formulation with curved edges and the original straight edge formulation. In both cases we adopt quite coarse meshes: a mesh made by a single quadrilateral element (\texttt{quad1}), and meshes composed by $2\times 2$, $4 \times 4$ and $8 \times 8$ quadrilaterals (\texttt{quad4}, \texttt{quad16} and \texttt{quad64} respectively). It is emphasized that, in the curved case, the elements of the mesh have a curved boundary and thus no error is introduced in reproducing the geometry, while in the straight case the elements are straight quadrilaterals hence introducing a rectification error for the domain boundary approximation. 
The problem is the same described at the start of this section and the reference solution was obtained with FEAP as described above.

In Table~\ref{tab:dispAandB} we report the error in the radial displacement at points A and B (for the final time instant $t=20$), comparing, for various meshes and polynomial orders $k$, the straight case with the curved case. The advantage of the curved formulation (also considering that the computational cost is essentially equivalent to that of the straight formulation) appears clearly, in particular for higher values of $k$ and finer meshes.

%
\begin{table}[!htb]
\begin{center}
{\large Error on the radial displacement for point $A$}\\[1.5mm]
\begin{tabular}{|r|cc|cc|cc|}
\cline{2-7}
\multicolumn{1}{c}{}&
\multicolumn{2}{|c|}{$k=1$} &
\multicolumn{2}{c|}{$k=2$} &
\multicolumn{2}{c|}{$k=3$}\\
\cline{2-7}
\multicolumn{1}{c|}{}&VEM $s$ &VEM $cv$ &VEM $s$ &VEM $cv$ &VEM $s$ &VEM $cv$\\
\hline
\texttt{quad1}  &7.2993e-01  &7.0206e-01 &3.1914e-01 &1.4912e-01 &1.1079e-01 &7.3008e-02 \\
\texttt{quad4}  &5.0015e-01  &4.5682e-01 &5.6851e-02 &1.9920e-02 &4.4452e-02 &2.0313e-02 \\
\texttt{quad16} &2.1915e-01  &1.8054e-01 &9.3065e-03 &9.2705e-03 &1.5322e-02 &2.0427e-03 \\
\texttt{quad64} &6.7435e-02  &5.2200e-02 &2.6149e-03 &2.0149e-03 &4.2298e-03 &1.3574e-04 \\
\hline
\end{tabular} 
\end{center}
\begin{center}
{\large Error on the radial displacement for point $B$}\\[1.5mm]
\begin{tabular}{|r|cc|cc|cc|}
\cline{2-7}
\multicolumn{1}{c}{}&
\multicolumn{2}{|c|}{$k=1$} &
\multicolumn{2}{c|}{$k=2$} &
\multicolumn{2}{c|}{$k=3$}\\
\cline{2-7}
\multicolumn{1}{c|}{}&VEM $s$ &VEM $cv$ &VEM $s$ &VEM $cv$ &VEM $s$ &VEM $cv$\\
\hline
\texttt{quad1}  &6.2943e-01  &6.4095e-01  &3.6412e-01 &3.7846e-01 &3.0301e-01 &4.0254e-01 \\
\texttt{quad4}  &4.4898e-01  &4.2437e-01  &1.5582e-01 &1.1627e-01 &4.4705e-02 &2.3491e-02 \\
\texttt{quad16} &1.9818e-01  &1.6598e-01  &3.7664e-02 &2.3992e-02 &1.0348e-02 &1.0121e-04 \\
\texttt{quad64} &6.1086e-02  &4.7770e-02  &7.6054e-03 &3.7863e-03 &3.0215e-03 &2.4937e-05 \\
\hline
\end{tabular} 
\end{center}
\caption{Example 3. Thick-walled viscoelastic cylinder with internal pressure. Comparison of curved and straight VEMs. Relative errors on the radial displacement for points $A$ and $B$.}
\label{tab:dispAandB}
\end{table}
\paragraph{Example 4. Perforated plastic plate.}
\label{ss:plasticstrip}
A rectangular strip with width of $2L = 200$ $\textrm{mm}$ width and length of $2H = 360$ $\textrm{mm}$ with a circular hole of $2R = 100$ $\textrm{mm}$ diameter in its center is considered (see Fig. \ref{fig:strip_wt_hole}). Material response here follows classical von Mises plastic constitutive model, with material parameters: $E = 7000$ $\textrm{kg} / \textrm{mm}^2$, $\nu = 0.3$,  and yield stress $\sigma_{\textrm{y},0} = 24.3$ $\textrm{kg} / \textrm{mm}^2$ \cite{Zienkiewicz_Taylor_Zhu13}. Plane strain assumption is assumed, and a standard backward Euler scheme with return map projection is used for stress and material moduli computation at the quadrature point level \cite{simo_computational_1998}. Displacement boundary restraints are prescribed for normal components on symmetry boundaries and on top and lateral boundaries. Loading is applied by a uniform normal displacement $\delta = 2$ $\textrm{mm}$ with $400$ equal increments on the upper edge, see Fig.~\ref{fig:strip_wt_hole}. 

Owing to symmetry, only one quadrant of the perforated strip is discretized, as shown in Fig.~\ref{fig:strip_wt_hole_mesh}, with structured quadrilaterals, and a centroid based Voronoi tessellation, respectively. The simulation refers to virtual elements of order $k=2,3$, in the straight and curved variant, respectively.

Accuracy and robustness is assessed by plotting the structural response of the structure, in terms of force reaction sum at the imposed displacement top edge {\it vs.} integration step, which seems correct for all compared methods and mesh types, showing no significant spurious locking phenomena { (at least for the presented problem and polynomial degrees)}. The present benchmark confirms the proposed novel curved VEM methodology as a powerful tool to be implemented in a standard nonlinear FEM structural analysis platform.

\begin{figure}[!htb]
\centering
\includegraphics[bb=50 0 600 825, clip, angle=0, scale=0.3]{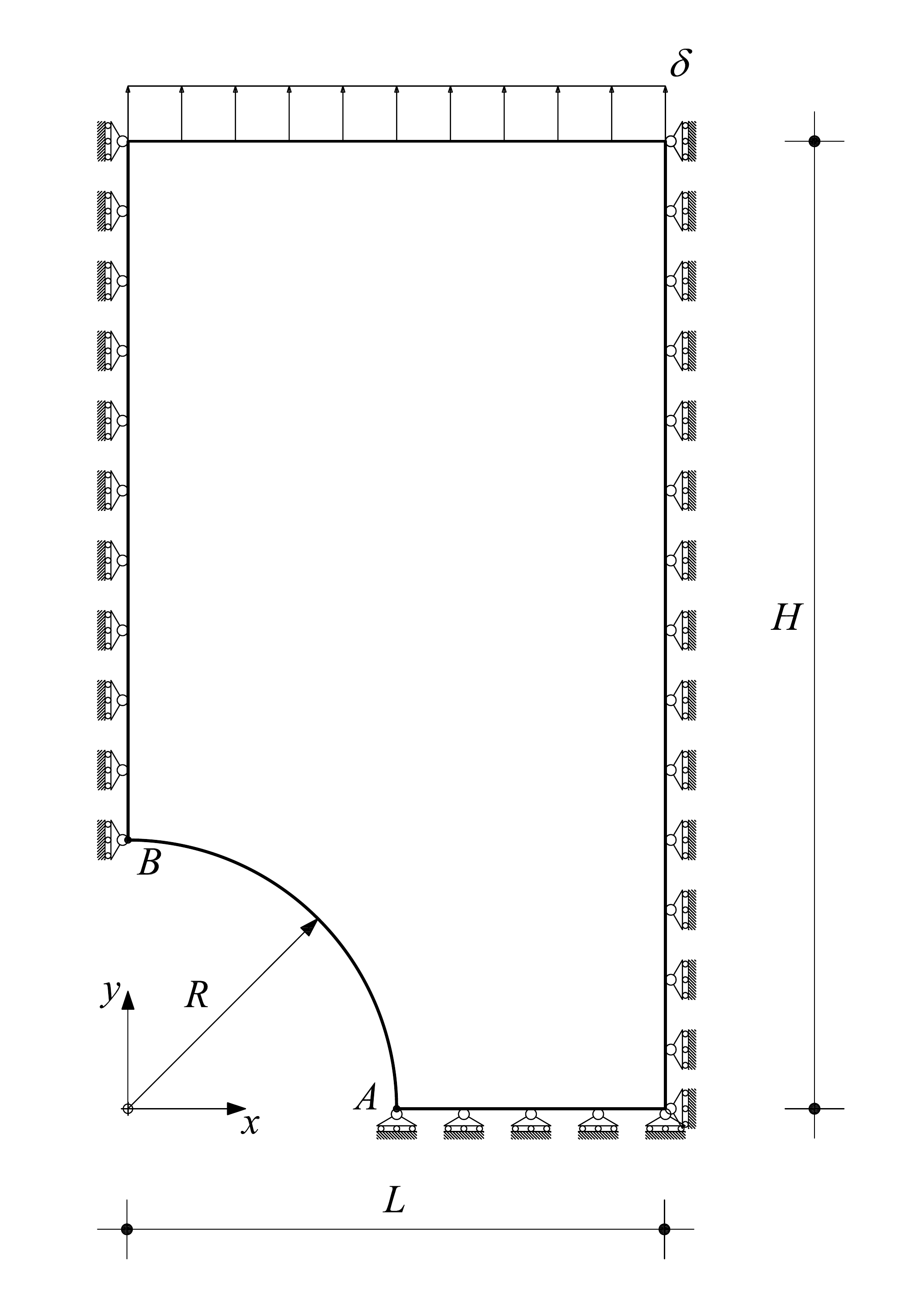}
\caption{Example 4. Perforated plastic plate. Geometry, boundary conditions, loading.}
\label{fig:strip_wt_hole}
\end{figure}

\begin{figure}[!htb]
\begin{center}
\begin{tabular}{ccc}
\includegraphics[width=0.26\textwidth]{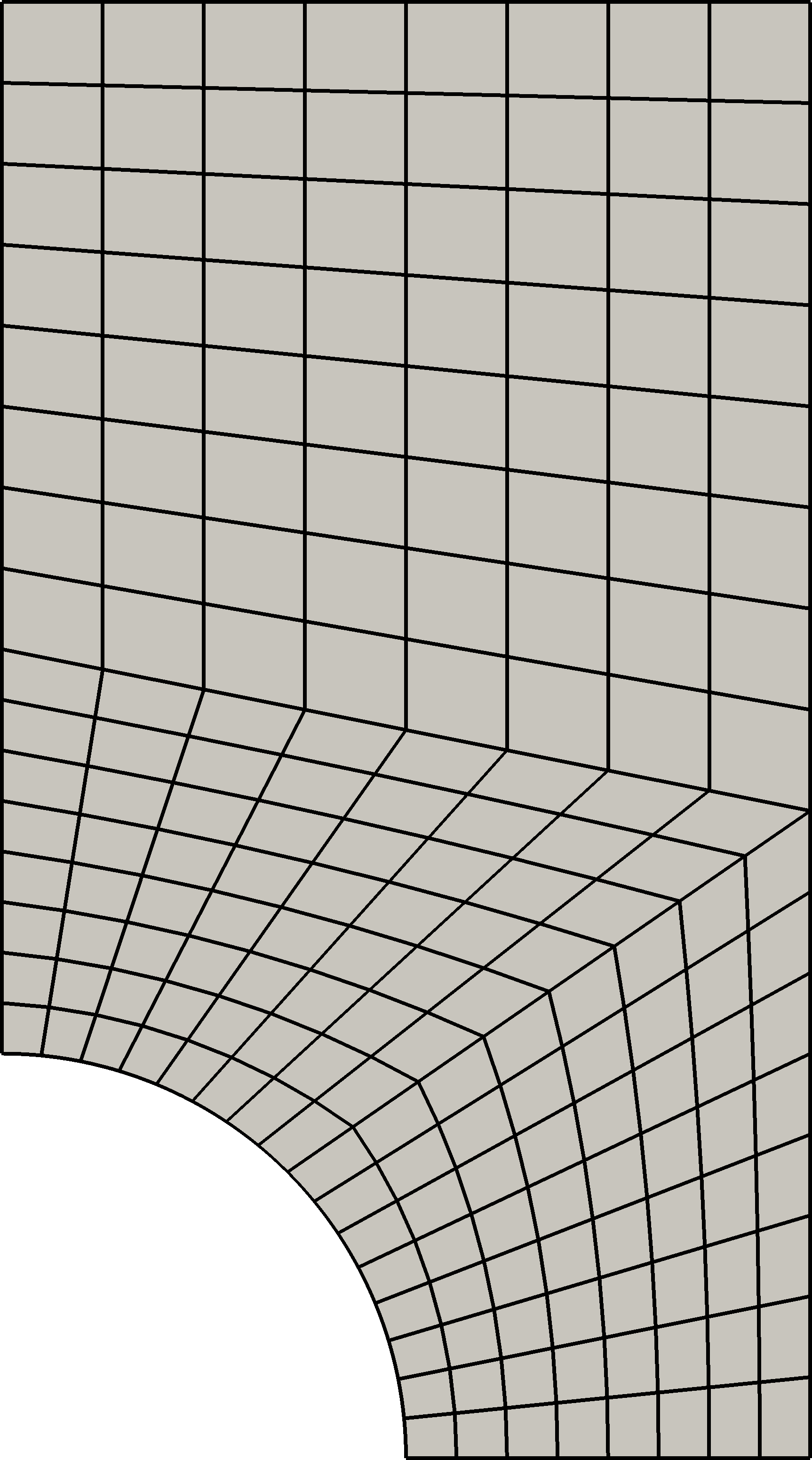}&\phantom{mmm}& 
\includegraphics[width=0.26\textwidth]{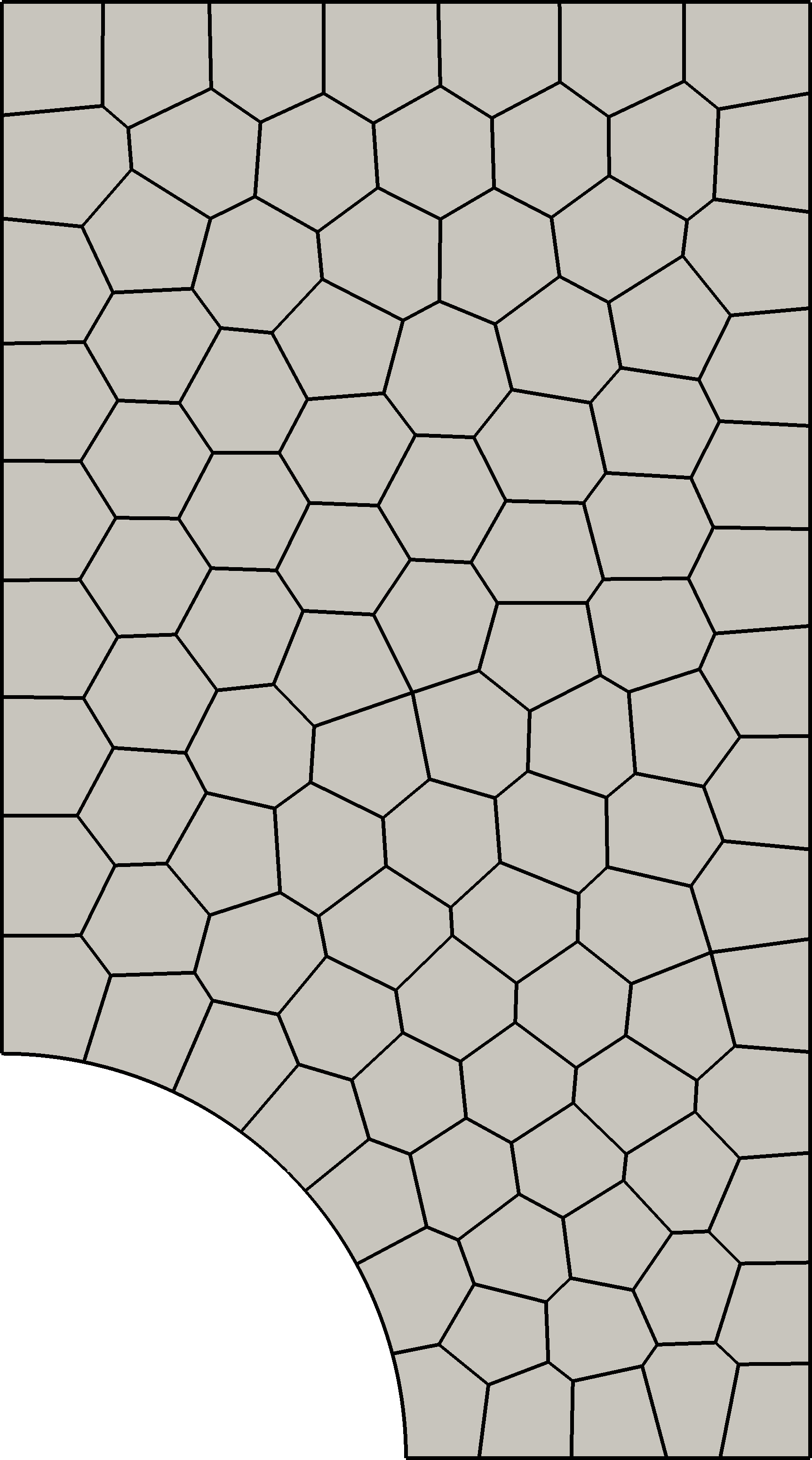}\\
&&\\
(a) & &(b)
\end{tabular} 
\end{center}
\caption{Example 4. Perforated plastic plate. (a) Structured quadrilateral mesh. (b) Voronoi mesh.}
\label{fig:strip_wt_hole_mesh}
\end{figure}

\begin{figure}
\begin{center}
\begin{tabular}{cc}
\includegraphics[width=0.48\textwidth]{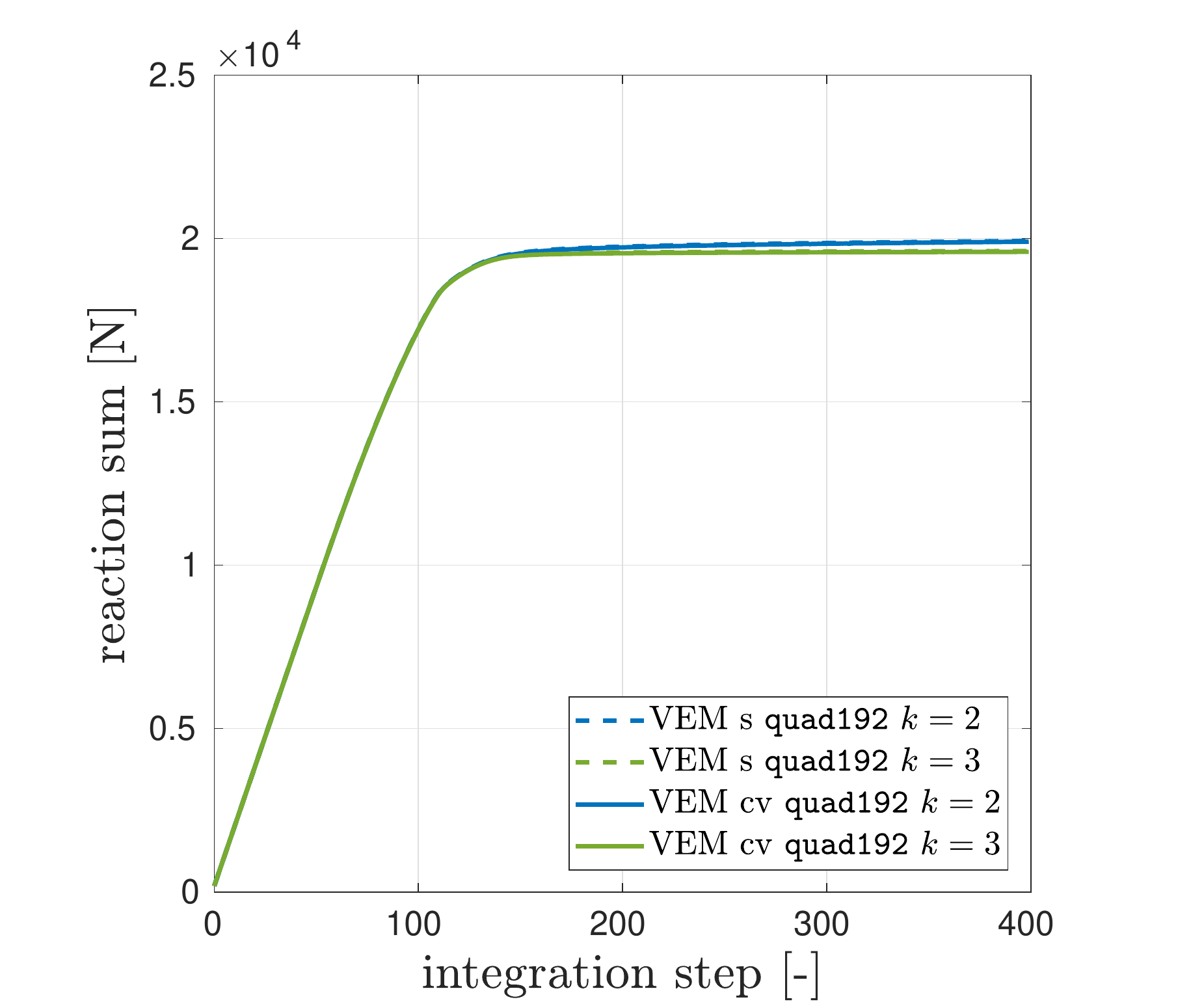} &
\includegraphics[width=0.48\textwidth]{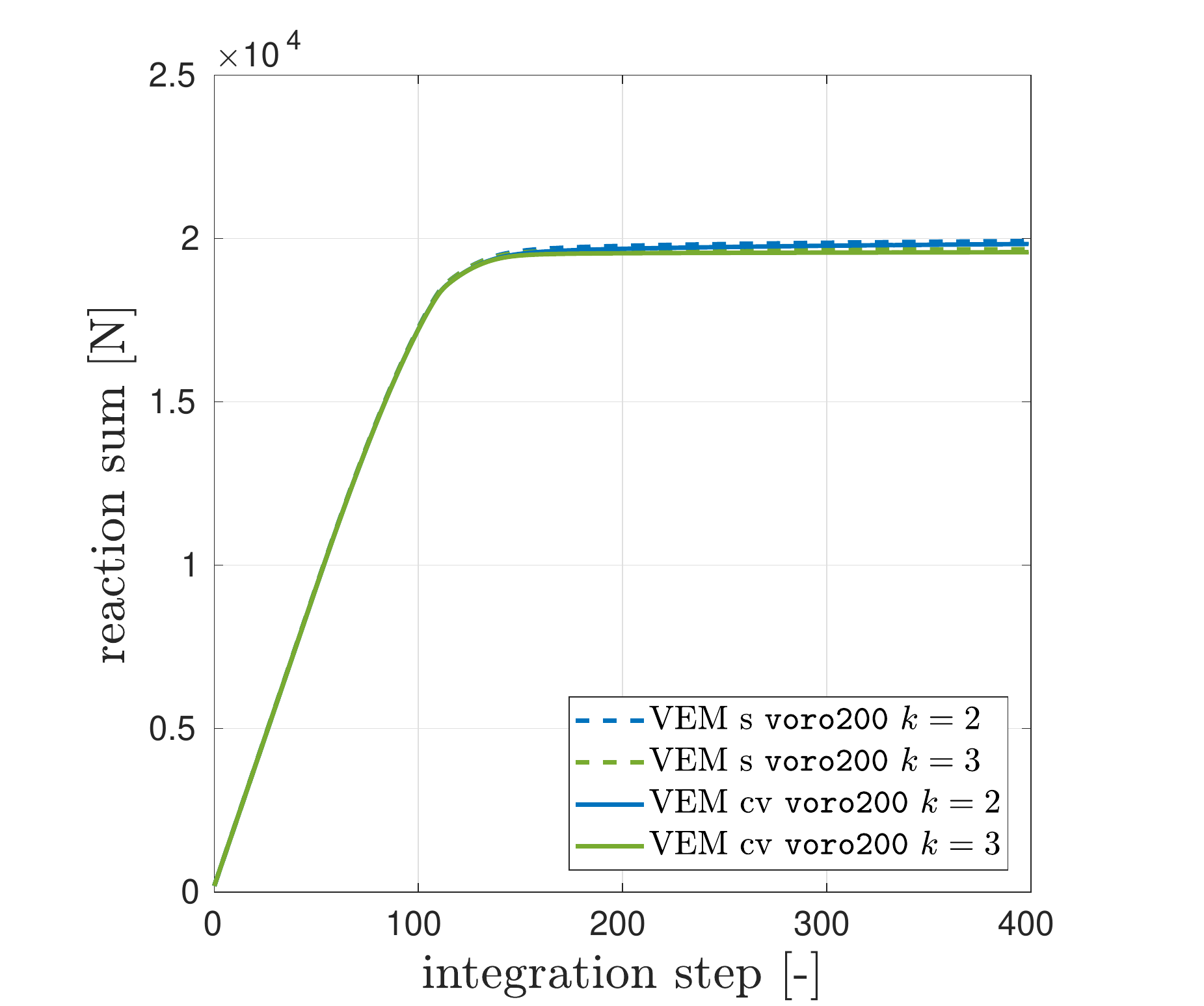}\\
(a) & (b)
\end{tabular}
\end{center}
\caption{Example 4. Perforated plastic plate. Structural response. (a) Quadrilateral mesh. (b) Voronoi mesh.}
\label{fig:strip_resp}
\end{figure}


\section{Conclusions}

We generalized the curvilinear Virtual Element technology to generic 2D solid mechanics problems in the small strain regime. 
The proposed approach can accept a generic black-box (elastic or inelastic) constitutive algorithm and, in addition, can make use of polygonal meshes with curved edges thus leading to an exact approximation of the geometry. 
We also introduced a novel Virtual Element space for displacements on curvilinear elements that, differently from the original one, contains also rigid body motions.
After presenting rigorous theoretical interpolation properties for the new space, we developed an extensive numerical test campaign to assess the behavior of the scheme. The campaign included a convergence analysis on a nonlinear elastic problem, a patch test to verify rigid body motions, standard benchmarks with inelastic materials, a study on the integration points rule to be adopted, and a comparison among curvilinear and standard Virtual Elements. It is noted that a curvilinear implementation does not imply any significant complication with respect to standard VEM technology and can hence be implemented into existing codes at very limited expense. Numerical results are very promising and indicate the proposed curvilinear VEM as a viable and competitive technology.

\section*{Acknowledgements}
E. Artioli gratefully acknowledges the partial financial support of the University of Rome Tor Vergata Mission Sustainability
Programme through project SPY-E81I18000540005; and the partial financial support of PRIN 2017 project ''3D PRINTING: A BRIDGE TO THE FUTURE (3DP\_Future). Computational methods, innovative applications, experimental validations of new materials and technologies.'', grant 2017L7X3CS\_004.

L. Beir\~ao da Veiga and F. Dassi were partially supported by the European Research Council through
the H2020 Consolidator Grant (grant no. 681162) CAVE, Challenges and Advancements in Virtual Elements. This support is gratefully acknowledged.

\bibliographystyle{plain}
\bibliography{VEM,VEM2}

\begin{thebibliography}{10}

\bibitem{projectors}
B.~Ahmad, A.~Alsaedi, F.~Brezzi, L.~D. Marini, and A.~Russo.
\newblock Equivalent projectors for virtual element methods.
\newblock {\em Comput. Math. Appl.}, 66(3):376--391, 2013.

\bibitem{AnanArxiv}
A.~Anand, J.~S. Ovall, S.~Reynolds, and S.~Wei\ss~er.
\newblock {T}refftz {F}inite {E}lements on {C}urvilinear {P}olygons.
\newblock Preprint ar{X}iv:1906.09015.

\bibitem{Ant2017}
P.~F. Antonietti, G.~Manzini, and M.~Verani.
\newblock The fully nonconforming virtual element method for biharmonic
  problems.
\newblock {\em Math. Models Methods Appl. Sci.}, 28(02):387--407, 2017.

\bibitem{artioli2018asymptotic}
E.~Artioli.
\newblock Asymptotic homogenization of fibre-reinforced composites: a virtual
  element method approach.
\newblock {\em Meccanica}, 53(6):1187--1201, 2018.

\bibitem{PartII}
E.~Artioli, L.~Beir\~ao~da Veiga, C.~Lovadina, and E.~Sacco.
\newblock Arbitrary order 2{D} virtual elements for polygonal meshes: Part
  {I}{I}, inelastic problem.
\newblock {\em Comput. Mech.}, 60(4):643--657, 2017.

\bibitem{PartI}
E.~Artioli, L.~Beir{\~a}o~da Veiga, C.~Lovadina, and E.~Sacco.
\newblock Arbitrary order 2{D} virtual elements for polygonal meshes: Part {I},
  elastic problem.
\newblock {\em Comput. Mech.}, 60(3):355--377, 2017.

\bibitem{ARTIOLI2018978}
E.~Artioli, S.~de~Miranda, C.~Lovadina, and L.~Patruno.
\newblock A family of virtual element methods for plane elasticity problems
  based on the {H}ellinger-{R}eissner principle.
\newblock {\em Comput. Methods Appl. Mech. Engrg.}, 340:978 -- 999, 2018.

\bibitem{ArtMarSacc2018}
E.~Artioli, S.~Marfia, and E.~Sacco.
\newblock High-order virtual element method for the homogenization of long
  fiber nonlinear composites.
\newblock {\em Comput. Methods Appl. Mech. Engrg.}, 341:571 -- 585, 2018.

\bibitem{Artioli-Vianello}
E.~Artioli, A.~Sommariva, and M.~Vianello.
\newblock Algebraic cubature on polygonal elements with a circular edge.
\newblock {\em Submitted}, 2019.

\bibitem{autostoppisti}
L.~Beir\~ao~da Veiga, F.~Brezzi, L.~D. Marini, and A.~Russo.
\newblock The hitchhiker's guide to the virtual element method.
\newblock {\em Math. Models Methods Appl. Sci.}, 24(08):1541--1573, 2014.

\bibitem{apollo}
L.~Beir\~ao~da Veiga, F.~Dassi, and A.~Russo.
\newblock High-order virtual element method on polyhedral meshes.
\newblock {\em Comput. Math. Appl.}, 74(5):1110--1122, 2017.

\bibitem{BRV_curvi}
L.~Beir\~ao~da Veiga, A.~Russo, and G.~Vacca.
\newblock The virtual element method with curved edges.
\newblock {\em ESAIM: M2AN}, 53(2):375--404, 2019.

\bibitem{volley}
L.~Beir{\~a}o~{da Veiga}, F.~Brezzi, A.~Cangiani, G.~Manzini, L.~D. Marini, and
  A.~Russo.
\newblock Basic principles of virtual element methods.
\newblock {\em Math. Models Methods Appl. Sci.}, 23(1):199--214, 2013.

\bibitem{genCoeff}
L.~Beir{\~a}o~da Veiga, F.~Brezzi, L.~D. Marini, and A.~Russo.
\newblock Virtual element method for general second-order elliptic problems on
  polygonal meshes.
\newblock {\em Math. Models Methods Appl. Sci.}, 26(04):729--750, 2016.

\bibitem{BLM}
L.~Beir{\~a}o~da Veiga, C.~Lovadina, and D.~Mora.
\newblock A virtual element method for elastic and inelastic problems on
  polytope meshes.
\newblock {\em Comput. Methods Appl. Mech. Eng.}, 295:327--346, 2015.

\bibitem{Ben2018}
M.~F. Benedetto, A.~Caggiano, and G.~Etse.
\newblock Virtual elements and zero thickness interface-based approach for
  fracture analysis of heterogeneous materials.
\newblock {\em Comput. Methods Appl. Mech. Engrg.}, 338:41--67, 2018.

\bibitem{BertoArxiv}
S.~Bertoluzza, M.~Pennacchio, and D.~Prada.
\newblock High order {V}{E}{M} on curved domains.
\newblock Preprint ar{X}iv:1811.04755.

\bibitem{biabanaki2014polygonal}
S.O.R. Biabanaki, A.R. Khoei, and P.~Wriggers.
\newblock Polygonal finite element methods for contact-impact problems on
  non-conformal meshes.
\newblock {\em Comput. Methods Appl. Mech. Engrg.}, 269:198--221, 2014.

\bibitem{Botti2018}
L.~Botti and D.~A. Di~Pietro.
\newblock Assessment of {H}ybrid {H}igh-{O}rder methods on curved meshes and
  comparison with discontinuous {G}alerkin methods.
\newblock {\em J. Comput. Phys.}, 370:58--84, 2018.

\bibitem{Brezzi:Marini:plates}
F.~Brezzi and L.~D. Marini.
\newblock Virtual element methods for plate bending problems.
\newblock {\em Comput. Methods Appl. Mech. Engrg.}, 253:455--462, 2013.

\bibitem{Chi2017}
H.~Chi, L.~Beir{\~{a}}o~da Veiga, and G.H. Paulino.
\newblock Some basic formulations of the virtual element method ({VEM}) for
  finite deformations.
\newblock {\em Comput. Methods Appl. Mech. Engrg.}, 318:148--192, 2017.

\bibitem{CHI201921}
H.~Chi, Beir{\~a}o da~Veiga~L., and Paulino G.H.
\newblock A simple and effective gradient recovery scheme and a posteriori
  error estimator for the {V}irtual {E}lement {M}ethod ({V}{E}{M}).
\newblock {\em Comput. Methods Appl. Mech. Engrg.}, 347:21 -- 58, 2019.

\bibitem{chi2015polygonal}
H.~Chi, C.~Talischi, O.~Lopez-Pamies, and G.~H. Paulino.
\newblock Polygonal finite elements for finite elasticity.
\newblock {\em Int. J. Numer. Meth. Eng.}, 101(4):305--328, 2015.

\bibitem{libro_sui_FEM_di_Ciarlet}
P.~G. Ciarlet.
\newblock {\em The finite element method for elliptic problems}, volume~40.
\newblock Siam, 2002.

\bibitem{cottrell2009isogeometric}
J.~A. Cottrell, T.~J.~R. Hughes, and Y.~Bazilevs.
\newblock {\em Isogeometric analysis: toward integration of {CAD} and {FEA}}.
\newblock John Wiley \& Sons, 2009.

\bibitem{BBM-elast}
L.~Beir{\~a}o da~Veiga, F.~Brezzi, and L.~D. Marini.
\newblock Virtual elements for linear elasticity problems.
\newblock {\em {SIAM} J. Numer. Anal.}, 51(2):794--812, 2013.

\bibitem{DassiArxivHR2109}
F.~Dassi, C.~Lovadina, and M.~Visinoni.
\newblock A three-dimensional {H}ellinger-{R}eissner virtual element method for
  linear elasticity problems.
\newblock Preprint ar{X}iv:1906.06119.

\bibitem{di2015hybrid}
D.~A. Di~Pietro and A.~Ern.
\newblock A hybrid high-order locking-free method for linear elasticity on
  general meshes.
\newblock {\em Comput. Methods Appl. Mech. Engrg.}, 283:1--21, 2015.

\bibitem{gain2015topology}
A.~L. Gain, G.~H. Paulino, L.~S. Duarte, and I.F.M. Menezes.
\newblock Topology optimization using polytopes.
\newblock {\em Comput. Methods Appl. Mech. Engrg.}, 293:411--430, 2015.

\bibitem{gain2014virtual}
A.~L. Gain, C.~Talischi, and G.~H. Paulino.
\newblock On the virtual element method for three-dimensional linear elasticity
  problems on arbitrary polyhedral meshes.
\newblock {\em Comput. Methods Appl. Mech. Engrg.}, 282:132--160, 2014.

\bibitem{lawPrimo}
G.~N. Gatica, A.~M{\'{a}}rquez, and W.~Rudolph.
\newblock A priori and a posteriori error analyses of augmented twofold saddle
  point formulations for nonlinear elasticity problems.
\newblock {\em Comput. Methods Appl. Mech. Engrg.}, 264:23--48, sep 2013.

\bibitem{hughes2005isogeometric}
T.~J.~R. Hughes, J.~A. Cottrell, and Y.~Bazilevs.
\newblock Isogeometric analysis: {CAD}, finite elements, {NURBS}, exact
  geometry and mesh refinement.
\newblock {\em Comput. Methods Appl. Mech. Engrg.}, 194(39-41):4135--4195,
  2005.

\bibitem{Paulino2015bridging}
G.~H. Paulino and A.~L. Gain.
\newblock Bridging art and engineering using {E}scher-based virtual elements.
\newblock {\em Struct. Multidiscip. O.}, 51(4):867--883, 2015.

\bibitem{PinTrov2019}
M.~Pingaro, E.~Reccia, and P.~Trovalusci.
\newblock Homogenization of random porous materials with low-order virtual
  elements.
\newblock {\em ASCE-ASME Journal of Risk and Uncertainty in Engineering
  Systems, Part B: Mechanical Engineering}, 5(3):030905, 2019.

\bibitem{simo_computational_1998}
J.~C. Simo and T.~J.~R. Hughes.
\newblock {\em Computational Inelasticity}.
\newblock Springer, 1998.

\bibitem{vianello1}
A.~Sommariva and M.~Vianello.
\newblock Product {G}auss cubature over polygons based on {G}reen's integration
  formula.
\newblock {\em BIT}, 47(2):441--453, 2007.

\bibitem{vianello2}
A.~Sommariva and M.~Vianello.
\newblock Gauss-green cubature and moment computation over arbitrary
  geometries.
\newblock {\em J. Comput. App. Math.}, 231:886--896, 2009.

\bibitem{vianello3}
A.~Sommariva and M.~Vianello.
\newblock Compression of multivariate discrete measures and applications.
\newblock {\em Numer. Funct. Anal. Optim.}, 36(9):1198--1223, 2015.

\bibitem{sukumar2004conforming}
N.~Sukumar and A.~Tabarraei.
\newblock Conforming polygonal finite elements.
\newblock {\em Int. J. Numer. Meth. Eng.}, 61(12):2045--2066, 2004.

\bibitem{PolyMesher}
C.~Talischi, G.~H. Paulino, A.~Pereira, and I.~F.~M. Menezes.
\newblock {P}olymesher: a general-purpose mesh generator for polygonal elements
  written in {M}atlab.
\newblock {\em Struct. Multidiscip. O.}, 45(3):309--328, Mar 2012.

\bibitem{TA2018}
R.~L. Taylor and E.~Artioli.
\newblock {VEM} for inelastic solids.
\newblock {\em Comput. Methods Appl. Sci.}, 46:381--394, 2018.

\bibitem{wriggers2017low}
P.~Wriggers and B.~Hudobivnik.
\newblock A low order virtual element formulation for finite elasto-plastic
  deformations.
\newblock {\em Comput. Methods Appl. Mech. Engrg.}, 327:459--477, 2017.

\bibitem{wriggers2016virtual}
P.~Wriggers, W.T. Rust, and B.D. Reddy.
\newblock A virtual element method for contact.
\newblock {\em Comput. Mech.}, 58(6):1039--1050, 2016.

\bibitem{Wriggers-2}
P.~Wriggers, W.T. Rust, B.D. Reddy, and B.~Hudobivnik.
\newblock Efficient virtual element formulations for compressible and
  incompressible finite deformations.
\newblock {\em Comput. Mech.}, 60(2):253--268, 2017.

\bibitem{Zienkiewicz_Taylor_Fox}
O.~C. Zienkiewicz, R.~L. Taylor, and D.~D. Fox.
\newblock {\em The Finite Element Method for Solid and Structural Mechanics}.
\newblock Butterworth Heinemann, 2014.

\bibitem{Zienkiewicz_Taylor_Zhu13}
O.~C. Zienkiewicz, R.~L. Taylor, and J.~Z. Zhu.
\newblock {\em The Finite Element Method: its Basis and Fundamentals}.
\newblock Butterworth Heinemann, 2013.

\end{thebibliography}

\end{document}